\newcounter{numberofremark}
\newcommand\nothing[1]{}
\newcommand{\dcl}{\DeclareMathOperator}
\dcl\C{\mathbb C}
\dcl\g{\mathfrak{g}}
\dcl\gl{\mathfrak{gl}}
\dcl\gr{gr}
\dcl\gts{\mathfrak{G}}
\dcl\K{\mathbb K}
\dcl\NN{\mathbb{N}}
\dcl\qdet{qdet}
\dcl\Sp{Specm}
\dcl\tr{\textup{tr}}
\dcl\wk{\mathbb{W}}
\dcl\Z{\mathbb{Z}}
\dcl\Y{Y}
\dcl\U{U}
\dcl\T{T}
\renewcommand\k{{\Bbbk}}
\newlength\yStones
\newlength\xStones
\newlength\xxStones
\def\Stones{\pst@object{Stones}}
\def\Stones@i#1{%
  \pst@killglue%
  \begingroup%
  \use@par%
  \setlength\xxStones{\xStones}%
  \expandafter\Stones@ii#1,,\@nil
  \endgroup
  \global\addtolength\xStones{0.6cm}%
  \global\addtolength\yStones{-7.5mm}}%
\def\Stones@ii#1,#2,#3\@nil{%
  \rput(\xxStones,\yStones){%
    \psframebox[framesep=0]{%
      \parbox[c][6mm][c]{11mm}{\makebox[11mm]{$#1$}}}}%
  \addtolength\xxStones{1.2cm}%
  \ifx\relax#2\relax\else\Stones@ii#2,#3\@nil\fi}
\def\Stone#1{\fbox{\makebox[8mm]{\strut#1}}\kern2pt}
\newtheorem{theorem}{Theorem}[section]
\newtheorem{lemma}[theorem]{Lemma}
\newtheorem{corollary}[theorem]{Corollary}
\newtheorem{proposition}[theorem]{Proposition}
\newtheorem{example}[theorem]{Example}
\newtheorem{remark}[theorem]{Remark}
\newtheorem{notation}[theorem]{Notation}
\newtheorem{definition}[theorem]{Definition}
\begin{document}
\allowdisplaybreaks

\title{Gelfand-Tsetlin varieties for $\gl_n$}

\author{Germán Benitez Monsalve}
\address{\noindent Departamento de Matem\'atica, Instituto de Ciências Exatas, Universidade Federal do
Amazonas,  Manaus AM, Brazil}
\email{gabm03@gmail.com}


\begin{abstract}
S. Ovsienko proved that the Gelfand-Tsetlin variety for $\gl_n$ is equidimensional (i.e. all its irreducible components have the same dimension) with dimension equals $\frac{n(n-1)}{2}$. This result has important consequences in Repre\-sentation Theory of Algebras, implying, in particular, the equidimensionality of the nilfiber of the Kostant-Wallach map. In this paper we will present the generalization of this result and will address a version of Ovsienko's Theorem which includes the regular case. 

\end{abstract}
\maketitle

\tableofcontents    


\section{Introduction}

For the general linear Lie algebra $\gl_n$ over $\k$, consider the Gelfand-Tsetlin variety associated to the Gelfand-Tsetlin subalgebra $\Gamma$ for $\gl_n$. Ovsienko proved in \cite{Ovs} that this variety is equidimensional (that is, all its irreducible components have the same dimension) and this dimension equals $\frac{n(n-1)}{2}$. This result is called \textit{``Ovsienko's Theorem''} and, as a consequence of it, one obtains that the Gelfand-Tsetlin variety is a complete intersection. Independently, B. Kostant and N. Wallach in \cite{KW-1} and \cite{KW-2} proved that all regular components of the Gelfand-Tsetlin variety for $\gl_n$ have dimension $\frac{n(n-1)}{2}$.
 
A well-known Theorem of Kostant (Theorem ($0.13$) in \cite{K}) says that for a complex semisimple Lie algebra $\g$ the universal enveloping algebra $U(\g)$ is a free module over its center. When $\g=\gl_n$, the variety associated to the center of  the universal enveloping algebra $U(\gl_n)$ coincides with the variety of nilpotent matrices, which is  irreducible  of dimension $n^2-n$. A generalization of this result  was proved by V. Futorny and S. Ovsienko in \cite{FutOvs05} for a class of special filtered algebras. In particular, $U(\gl_n)$ is free as a left (or right) module over its Gelfand-Tsetlin subalgebra \cite{O}, the restricted Yangian $Y_p(\gl_n)$ of level $p$ for $gl_n$ is free over the center \cite{FutOvs05}. For the full Yangian $Y(\gl_n)$ this was shown previously by  A. Molev, M. Nazarov and G. Olshanski\u{i}  \cite{MNO}. In \cite{FMO} the authors showed that the finite $W$-algebras associated with $\gl_2$ (in particular, the restricted Yangian  $Y_p(\gl_2)$ of level $p$) is free over its Gelfand-Tsetlin subalgebra.



Gelfand-Tsetlin algebras appear in several contexts. Such algebras are related to important problems in representation theory of Lie algebras, for instances, E. B. Fomenko and A. S. Mischenko in \cite{FomMis} related such algebras in connection with the solutions of the Euler equation. Also, {\`E}. B. Vinberg in \cite{Vin} related the Gelfand-Tsetlin subalgebras in connection with subalgebras of ma\-xi\-mal Gelfand-Kirillov dimension of the universal enveloping algebra of a simple Lie algebra. B. Kostant and N. Wallach in {\cite{KW-1} and \cite{KW-2} used these subalgebras in connection with classical mechanics.

\vspace{0.5cm}

In this paper, we will study the Gelfand-Tsetlin variety for $\gl_n$, its equidimensio\-na\-lity and consequences of this. The paper is organized as follows: In section $\mathsection 2$ we introduce all necessary definitions, notations and results used throughout the paper, such as, equidimensionality of varieties, complete intersections and regular sequences. In section $\mathsection 3$ we recall the definition of Gelfand-Tsetlin subalgebras, Gelfand-Tsetlin varieties and Ovsienko's Theorem. In the same section we will prove a version of Ovsienko's Theorem (Theorem \eqref{fracagln}) using a different technique from \cite{Ovs} and as a consequence of this version we will show in Corollary \eqref{corVFraca} that, all the regular components of the Gelfand-Tsetlin variety for $\gl_n$ are equidimensional with dimension $\frac{n(n-1)}{2}$, moreover, all are isomorphic to the irreducible component $V\left(\left\{X_{ij}:1\leq i\leq j\leq n\right\}\right)\subset\k^{n^2}.$


It is  well-known that the fiber of zero $\Phi^{-1}(0)$  of the \textit{``Kostant-Wallach map''} \cite{KW-1}  
 $$\Phi:M_n(\C)\longrightarrow\C^{\frac{n(n+1)}{2}},$$
coincides with the Gelfand-Tsetlin variety of $\gl_n$ (Corollary \eqref{KWvsGTs}). M. Colarusso and S. Evens proved in \cite{ColEv}, that the fiber of zero $\Phi^{-1}_ 2(0)$ of the so-called \textit{``Partial Kostant-Wallach map''},   
 $$\Phi_2:M_n(\C)\longrightarrow\C^{n-1}\times\C^{n},$$
is equidimensional and in this case the dimension equals $n^2-2n+1$. Using different techniques, in Section $\mathsection 4$, we establish a generalization (Corollary \eqref{genColEv}) of this result, namely,
we define the \textit{``$k$-partial Kostant-Wallach map''}
 $$\Phi_k:M_n(\C)\longrightarrow\C^{n-k+1}\times\cdots\times\C^{n-1}\times\C^{n},$$
unifying the Kostant-Wallach map and the Partial Kostant-Wallach map and we prove that all its fibers $\Phi^{-1}_k(\alpha)$ for any $\alpha\in\C^{n-k+1}\times\cdots\times\C^{n-1}\times\C^{n}$ are equidimensional with dimension $n^2-(k+1)n+\frac{k(k+1)}{2}$.


Finally, in Section $\mathsection 5$ will be shown Ovsienko's Theorem for the cases $\gl_2,\gl_3$ and $\gl_4$ using a different technique from \cite{Ovs}.

\vspace{0,5cm}

\noindent{\bf Acknowledgements.} The author wishes to express his gratitude to Vyacheslav Futorny for suggesting the problem and for many stimulating conversations.


\section{Preliminaries}

Throughout the paper we fix an algebraically closed field $\k$ of characteristic zero.  

For a \textit{reduced} (without nilpotent elements) \textit{affine} $\k$-\textit{algebra} $\Lambda$ (that is, an associative and commutative finitely gene\-rated $\k$-algebra), we denote by $\Sp\Lambda$ the variety of all maximal ideals of $\Lambda$. If $\Lambda$ is a polynomial algebra in $n$ variables, then we identify $\Sp\Lambda$ with $\k^n$. For an ideal $I\subseteq\Lambda$, denote by $V(I)\subseteq\Sp\Lambda$, the set of all zeroes of $I$, called
\textit{variety}. If $I$ is generated by $g_1,g_2,\ldots,g_r$, then we write $I=(g_1,g_2,\ldots,g_r)$ and $V(I)=V(g_1,g_2,\ldots,g_r)$. A variety $V$ is an \textit{equidimensional variety} if all its irreducible components have the same dimension and we denote by $\dim V$ the dimension of $V$. 

We will denote the center of an associative filtrered algebra $\mathcal A$ by 
$Z(\mathcal A)$, the graded algebra of $\mathcal A$ by $\gr\mathcal(A)$ and the Universal enveloping algebra of a Lie algebra $\g$ by $U(\g)$.

Let $R$ be a commutative ring, $M$ be an $R-$module and $f:M\longrightarrow R$ be an $R-$linear map. The map
$$\begin{array}{rccl}
   & M^n                   & \longrightarrow & {\bigwedge} ^{n-1}M\\
   & (x_1,x_2,\ldots,x_n)  & \longmapsto     & \sum_{i=1}^n{(-1)^{i+1}f(x_i)x_1\wedge x_2\wedge\cdots\wedge\widehat{x}_i\wedge\cdots\wedge x_n}
                   \end{array}.$$
is an alternating $n-$linear map, where with $\widehat{x}_i$ we indicate that $x_i$ is to be omitted from the exterior product. By the universal property of the $n-$th exterior power there exists a unique $R$-linear map
$d_n^f:{\bigwedge} ^{n}M\longrightarrow {\bigwedge} ^{n-1}M$ with
$$\begin{array}{rccl}
d_n^f: & {\bigwedge} ^{n}M                   & \longrightarrow & {\bigwedge} ^{n-1}M\\
       & x_1\wedge x_2\wedge\ldots\wedge x_n & \longmapsto     & \sum_{i=1}^n{(-1)^{i+1}f(x_i)x_1\wedge x_2\wedge\cdots\wedge\widehat{x}_i\wedge\cdots\wedge x_n}
                   \end{array}.$$

\noindent Therefore, we have the chain complex of $R-$modules 
$$K_{\bullet}:=\xymatrix@C=14pt{\cdots\ar[r]&{\bigwedge} ^{n+1}M\ar[r]^-{d_{n+1}^f}&{\bigwedge} ^{n}M\ar[r]^-{d_{n}^f}&{\bigwedge} ^{n-1}M\ar[r]&\cdots\ar[r]^-{d_{3}^f}&{\bigwedge} ^{2}M\ar[r]^-{d_{2}^f}&M\ar[r]^-{f}&R\ar[r]&0}$$
called \textit{Koszul complex of} $f$ and denoted by $K_{\bullet}(f)$ (or $K_{\bullet}$).


Consider now a special case. Let $M$ be a free $R-$module with basis $e_1,e_2,\ldots,e_n$. Then, the $R$-linear map $f$ is determined uniquely by $x_i=f(e_i)\in R$ for any $i=1,2,\ldots,n$. Conversely, for a sequence
$\textbf{\textup{x}}=\{x_1,x_2,\ldots,x_n\}$ there exists an $R$-linear map $f$ over $M$ with $f(e_i)=x_i$. We denote the Koszul complex of $\textbf{\textup{x}}$ by
$$K_{\bullet}(\textbf{\textup{x}}):=K_{\bullet}(f).$$
                   

\noindent For an $R$-module $N$, the complex $K_{\bullet}(\textbf{\textup{x}},N):=K_{\bullet}(\textbf{\textup{x}})\otimes_R N$ is called \textit{Koszul complex of} $\textbf{\textup{x}}$ \textit{with coefficients in} $N$ and its homology is denoted by $H_n(\textbf{\textup{x}},N)$.
                   
Let $\textbf{\textup{x}}=\{x_1,x_2,\ldots,x_n\}$ be a sequence of elements in a ring $R$ and let $M$ be an $R$-module. We will say that the sequence is a \textit{complete intersection for} $M$ if  
$$H_{n}(\textbf{\textup{x}},M)=0, \ \ \forall n>0.$$
                   
A sequence $g_1,g_2,\ldots,g_t$ in a ring $R$ is called \emph{regular} if the image class of $g_i$ is not a zero divisor, and is not invertible in
$R/(g_1,g_2,\ldots,g_{i-1})$ for any $i=1,2,\ldots,t$.  


\begin{proposition}
\label{regperm}
Let $R$ be a noetherian ring and $g_1,g_2,\ldots,g_t$ a regular sequence of 
$R$. If $R$ is a graded ring and each $g_i$ is homogeneous of positive degree, then any permutation of $g_1,g_2,\ldots,g_t$ is regular in $R$.
\end{proposition}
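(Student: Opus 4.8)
The plan is to reduce the problem to the transposition of two adjacent elements and then iterate. Since any permutation of $g_1,\ldots,g_t$ is a composition of adjacent transpositions, and since swapping $g_i,g_{i+1}$ does not affect the regularity conditions imposed by $g_1,\ldots,g_{i-1}$ (which generate the same ideal before and after the swap) nor those imposed on $g_{i+2},\ldots,g_t$ (since $(g_1,\ldots,g_{i+1})$ is unchanged), it suffices to prove the following local statement: if $a,b$ is a regular sequence in a graded Noetherian ring $S=R/(g_1,\ldots,g_{i-1})$ with $a,b$ homogeneous of positive degree, then $b,a$ is also a regular sequence in $S$. Here I would first note that $S$ inherits a grading from $R$ and the images of $a=g_i$, $b=g_{i+1}$ remain homogeneous of positive degree.

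For the local statement, I would argue as follows. First, I must show $b$ is a nonzerodivisor in $S$. Suppose $bx = 0$ for some homogeneous $x \in S$; I want $x = 0$. Since $a,b$ is regular, $S/aS$ has $b$ as a nonzerodivisor, so from $bx \in aS$, say $bx = 0 \in aS$ trivially, actually I need the standard lemma: in a graded ring, if $a,b$ is a regular sequence of homogeneous positive-degree elements, one proves $\mathrm{Ann}_S(b) = 0$ using that $\mathrm{Ann}_S(b) \subseteq aS$ would follow from... Let me instead use the cleaner homological route. The key fact is that for homogeneous elements of positive degree in a Noetherian graded ring, regularity of a sequence is detected by the vanishing of Koszul homology (this is essentially the content of the complete intersection notion introduced above), and Koszul homology is manifestly symmetric under permutation of the sequence. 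Concretely, $H_\bullet(a,b;S)$ is independent of the order of $a,b$ up to the obvious isomorphism of Koszul complexes, since $K_\bullet(a,b) \cong K_\bullet(b,a)$ as complexes. Thus $a,b$ regular $\iff H_{>0}(a,b;S) = 0 \iff H_{>0}(b,a;S)=0 \iff b,a$ regular, where the equivalence between regularity and vanishing of Koszul homology for homogeneous positive-degree sequences is the standard graded local criterion (which holds because such a sequence sits in the irrelevant ideal, playing the role of the maximal ideal in the local case).

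Alternatively, avoiding Koszul machinery, I would give the elementary argument: to show $b$ is a nonzerodivisor on $S$, take homogeneous $x$ with $bx=0$. Since $b$ is a nonzerodivisor on $S/aS$ (as $a,b$ is regular), and $bx = 0 \in aS$, we get $x \in aS$, say $x = a x_1$ with $x_1$ homogeneous of strictly smaller degree. Then $0 = bx = abx_1$, and since $a$ is a nonzerodivisor on $S$, $bx_1 = 0$; iterating, $x \in \bigcap_k a^k S$. But by the graded Nakayama lemma (or the graded Krull intersection theorem), $\bigcap_k a^k S = 0$ because $a$ has positive degree and $S$ is Noetherian graded, so $x = 0$. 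Next, to show $a$ is a nonzerodivisor on $S/bS$: suppose $a y \in bS$ with $y$ homogeneous; write $ay = b z$. Modulo $aS$ this gives $bz \equiv 0$, and $b$ nonzerodivisor on $S/aS$ forces $z \in aS$, $z = a z_1$. Then $ay = baz_1$, so $y = b z_1 \in bS$ by the nonzerodivisor property of $a$ on $S$. Hence $a$ is a nonzerodivisor on $S/bS$. Finally, $a,b$ both noninvertible in the relevant quotients: noninvertibility is automatic since they have positive degree and hence lie in the irrelevant ideal, so they are never units in any graded quotient by a homogeneous ideal.

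The main obstacle is the correct handling of the "nonzerodivisor" step, specifically justifying $\bigcap_k a^k S = 0$; this is exactly where the graded and positive-degree hypotheses are essential (the statement is false for general rings — e.g. $x, y(1-x)$ in $k[x,y]_{(x,y)}$ shows ordinary regular sequences need not be permutable). I would cite the graded Krull intersection theorem or prove it directly: any homogeneous element of $\bigcap_k a^k S$ of degree $d$ lies in $a^{k}S$ for $k$ with $k \cdot \deg(a) > d$, which is impossible unless it is zero. Everything else is bookkeeping: verifying that adjacent transpositions generate the symmetric group, and that the regularity conditions for indices outside $\{i,i+1\}$ are literally unchanged because the partial ideals $(g_1,\ldots,g_j)$ for $j < i$ or $j \geq i+1$ do not see the swap.
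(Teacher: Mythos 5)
Your argument is correct, and it is in fact more informative than the paper, which proves Proposition \ref{regperm} only by citing Matsumura (Theorem 28 in the 1970 book, resp.\ Theorem 16.3 in \emph{Commutative ring theory}); what you write out is essentially the standard proof of that cited theorem: reduce to an adjacent transposition, pass to the graded quotient $S=R/(g_1,\ldots,g_{i-1})$, prove $\mathrm{Ann}_S(b)=0$ by the descent $x=ax_1$, $bx_1=0$, and handle the other nonzerodivisor condition by the order-free computation $ay=bz\Rightarrow z\in aS\Rightarrow y\in bS$. Your degree-descent step (strictly decreasing degrees of the $x_k$) is a clean graded substitute for the Nakayama argument used in the local case, but note two small points of hygiene: (i) it needs the grading to be concentrated in non-negative degrees, which is the situation of the paper (the graded rings here are $\gr$ of filtered algebras with $U_0=\k$, so polynomial rings with positive-degree generators) but should be said explicitly, since for a general $\Z$-graded ring ``homogeneous of positive degree'' does not force $\bigcap_k a^kS=0$; (ii) for non-invertibility you should observe that all the partial quotients are nonzero, which follows because every partial ideal of the permuted sequence sits inside $(g_1,\ldots,g_t)$, and this ideal is proper since $g_t$ is a non-unit modulo $(g_1,\ldots,g_{t-1})$ --- with that, a positive-degree homogeneous element cannot become a unit in a nonzero graded quotient. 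Finally, your parenthetical ``counterexample'' $x,\,y(1-x)$ in $\k[x,y]_{(x,y)}$ is not one: in a Noetherian local ring regular sequences are always permutable, and even in $\k[x,y]$ that pair is permutable; the classical example showing the necessity of some hypothesis is $x,\ y(1-x),\ z(1-x)$ versus $y(1-x),\ z(1-x),\ x$ in $\k[x,y,z]$. These are cosmetic repairs; the proof itself stands, and your Koszul-homology alternative (symmetry of $K_{\bullet}$ plus the graded criterion, cf.\ Proposition \ref{regvseq}(i)) is also a legitimate, if heavier, route.
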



\begin{proof} 
Theorem ($28$) in \cite{Mats70} (page $102$) or \cite{Mats89} (page $127$).

\end{proof}


\begin{corollary}
\label{regsubseq}
Let $R$ be a noetherian ring and $g_1,g_2,\ldots,g_t\in R$ a regular sequence of $R$. If $R$ is a graded ring and each $g_i$ is homogeneous of positive degree, then any subsequence of $g_1,g_2,\ldots,g_t$ is regular in $R$.
\end{corollary}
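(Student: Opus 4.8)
The plan is to deduce this directly from Proposition \ref{regperm} together with the elementary fact that every initial segment of a regular sequence is again regular.

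Fix a subsequence $g_{i_1}, g_{i_2}, \ldots, g_{i_s}$ of $g_1, g_2, \ldots, g_t$, with $1 \le i_1 < i_2 < \cdots < i_s \le t$, and let $g_{j_1}, g_{j_2}, \ldots, g_{j_{t-s}}$ denote the remaining $g$'s (listed, say, in increasing order of index). Then
$$ g_{i_1}, g_{i_2}, \ldots, g_{i_s}, g_{j_1}, g_{j_2}, \ldots, g_{j_{t-s}} $$
is a permutation of $g_1, g_2, \ldots, g_t$, and it consists of the same homogeneous elements of positive degree of the graded noetherian ring $R$. Hence Proposition \ref{regperm} applies and shows that this rearranged sequence is regular in $R$.

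Next I would invoke the definition of regularity: a sequence $h_1, h_2, \ldots, h_m$ is regular exactly when, for every $k = 1, 2, \ldots, m$, the class of $h_k$ in $R/(h_1, \ldots, h_{k-1})$ is neither a zero divisor nor invertible. For $k \le s$ this condition refers only to $h_1, \ldots, h_s$, so the initial segment $h_1, h_2, \ldots, h_s$ of any regular sequence is itself regular. Applying this with $h_k = g_{i_k}$ for $k = 1, \ldots, s$ (and $h_{s+\ell} = g_{j_\ell}$ for the remaining terms), the regular sequence produced in the previous paragraph has as its initial segment the regular sequence $g_{i_1}, g_{i_2}, \ldots, g_{i_s}$, which is exactly the assertion.

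There is no essential obstacle here; the argument is a short combination of Proposition \ref{regperm} with the definition of a regular sequence. The only points deserving a moment's attention are that the rearrangement genuinely preserves the hypotheses of Proposition \ref{regperm}, which it does since the underlying set of elements together with their gradings is unchanged, and the (immediate) remark that the conditions defining regularity of the first $s$ terms do not involve the later terms.
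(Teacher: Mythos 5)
Your argument is correct and is exactly the paper's intended proof: the paper simply cites Proposition \eqref{regperm} together with the definition of a regular sequence, which is the permutation-then-initial-segment reasoning you spell out in detail.
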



\begin{proof} 
By Proposition \eqref{regperm} and the definition of a regular sequence.

\end{proof}


\begin{proposition}
\label{defKoszul} 
Let $\textbf{\textup{g}}=\left\{g_1,g_2,\ldots,g_t\right\}$ be a regular sequence of a ring $R$ and let
$\textbf{\textup{e}}_1,\textbf{\textup{e}}_2,\ldots,\textbf{\textup{e}}_t$ be the standard basis of the free module $R^t$, then the sequence $\textbf{\textup{g}}$ is a complete intersection for $R$. Moreover, the sequence
$$\xymatrix{ \cdots\ar[r] & \bigwedge^{2}(R^t)\ar[r]^(.6){\partial} & R^t\ar[r]^{\eta} & R\ar[r]^(.25){\pi} & R/(g_1,g_2,\ldots,g_t)\ar[r] & 0 }$$
is exact, where $\pi$ is the projection, $\bigwedge^{i}$ is the $i-$th exterior power,  
$$\partial(\textbf{\textup{e}}_i\wedge \textbf{\textup{e}}_j)=g_i\textbf{\textup{e}}_j-g_j\textbf{\textup{e}}_i\ \ \ \ \ \ \ \mbox{and}\ \ \ \ \ \ \ \eta(f_1,f_2,\ldots,f_t)=\sum_{i=1}^t{f_ig_i}.$$
That sequence is called \textit{Koszul resolution}.
\end{proposition}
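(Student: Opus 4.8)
The plan is to argue by induction on the length $t$ of the sequence, using the multiplicativity of Koszul complexes together with the long exact homology sequence it produces. For the base case $t=1$ I would observe that $K_\bullet(g_1)$ is the two-term complex $0\to R\to R\to 0$ with differential multiplication by $g_1$; since the regularity hypothesis for $i=1$ says precisely that $g_1$ is a non-zero-divisor of $R$, its first homology $H_1(g_1,R)$ (the kernel of multiplication by $g_1$) vanishes and $H_0(g_1,R)=R/(g_1)$, so $\{g_1\}$ is a complete intersection for $R$.

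For the inductive step I would write $\mathbf{g}'=\{g_1,\dots,g_{t-1}\}$, which is again a regular sequence — it is an initial segment of $\mathbf{g}$, so the defining conditions hold verbatim and no appeal to Corollary \eqref{regsubseq} is needed — and assume the statement for length $t-1$, i.e. $H_i(\mathbf{g}',R)=0$ for $i>0$ and $H_0(\mathbf{g}',R)=R/(\mathbf{g}')$. The key input is the canonical isomorphism of complexes $K_\bullet(\mathbf{g})\cong K_\bullet(\mathbf{g}')\otimes_R K_\bullet(g_t)$, coming from $\bigwedge^\bullet(R^t)\cong\bigwedge^\bullet(R^{t-1})\otimes_R\bigwedge^\bullet(R)$; because $K_\bullet(g_t)$ is concentrated in homological degrees $0$ and $1$, this identifies $K_\bullet(\mathbf{g})$ with the mapping cone of multiplication by $g_t$ on $K_\bullet(\mathbf{g}')$ and hence gives a short exact sequence of complexes
$$0\longrightarrow K_\bullet(\mathbf{g}')\longrightarrow K_\bullet(\mathbf{g})\longrightarrow K_\bullet(\mathbf{g}')[-1]\longrightarrow 0,$$
where $[-1]$ is the shift in homological degree. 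Its long exact homology sequence
$$\cdots\longrightarrow H_i(\mathbf{g}',R)\longrightarrow H_i(\mathbf{g},R)\longrightarrow H_{i-1}(\mathbf{g}',R)\xrightarrow{\ \pm g_t\ }H_{i-1}(\mathbf{g}',R)\longrightarrow H_{i-1}(\mathbf{g},R)\longrightarrow\cdots$$
has connecting homomorphism multiplication by $\pm g_t$. Feeding in the induction hypothesis: for $i\ge 2$ both outer terms vanish, forcing $H_i(\mathbf{g},R)=0$; for $i=1$ the sequence shows $H_1(\mathbf{g},R)$ is the kernel of multiplication by $g_t$ on $R/(g_1,\dots,g_{t-1})$, which is zero since $g_t$ is a non-zero-divisor on that ring by hypothesis. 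Hence $H_i(\mathbf{g},R)=0$ for all $i>0$, that is, $\mathbf{g}$ is a complete intersection for $R$; and the end of the sequence identifies $H_0(\mathbf{g},R)$ with the cokernel of multiplication by $g_t$ on $R/(\mathbf{g}')$, namely $R/(g_1,\dots,g_t)$.

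The ``moreover'' assertion is then a direct reformulation. With $M=R^t$ and $f(\mathbf{e}_i)=g_i$ one checks from the definitions that $d_1^f=\eta$, $d_2^f=\partial$, and that $\bigwedge^i(R^t)$ is the term in homological degree $i$ of $K_\bullet(\mathbf{g})$. Then exactness at $\bigwedge^i(R^t)$ for $i\ge 1$ is exactly the vanishing $H_i(\mathbf{g},R)=0$ just proved; exactness at $R$ is the statement $\operatorname{im}\eta=(g_1,\dots,g_t)=\ker\pi$, which follows from $H_0(\mathbf{g},R)=R/(g_1,\dots,g_t)$ together with $\eta(f_1,\dots,f_t)=\sum_i f_i g_i$; and exactness at $R/(g_1,\dots,g_t)$ is the surjectivity of $\pi$. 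I expect the only genuinely delicate point to be justifying the mapping-cone description of $K_\bullet(\mathbf{g})$ — equivalently, verifying that the connecting map in the long exact sequence equals multiplication by $\pm g_t$ — since once that is in place the rest is purely formal. An alternative that sidesteps it is a hands-on induction directly on the explicit differentials $d_n^f$, establishing exactness of the augmented complex degree by degree; this also works, but the sign bookkeeping in the exterior algebra is heavier, so I would favour the homological argument above.
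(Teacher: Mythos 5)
The paper offers no argument of its own for Proposition \eqref{defKoszul}: its ``proof'' is a citation to Proposition 5 of \cite{Bourbaki}. Your proposal therefore supplies an actual proof where the paper has only a reference, and it is correct; moreover it is essentially the classical argument that the cited source encapsulates. The induction on $t$ via the isomorphism $K_\bullet(\mathbf{g})\cong K_\bullet(\mathbf{g}')\otimes_R K_\bullet(g_t)$, the resulting short exact sequence of complexes with quotient the shifted complex $K_\bullet(\mathbf{g}')[-1]$, and the long exact homology sequence whose connecting map is multiplication by $\pm g_t$, give exactly the standard acyclicity proof for the Koszul complex of a regular sequence. Your two side remarks are also sound: an initial segment of a regular sequence is regular directly from the paper's definition (so Corollary \eqref{regsubseq}, which concerns arbitrary subsequences and needs the graded hypothesis, is indeed not required), and the only genuinely delicate step is the identification of the connecting homomorphism with $\pm g_t$, which is the routine mapping-cone computation you flag. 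Granting it, your case analysis ($i\ge 2$ kills $H_i$, $i=1$ uses that $g_t$ is a non-zero-divisor on $R/(g_1,\ldots,g_{t-1})$, and $H_0(\mathbf{g},R)=R/(g_1,\ldots,g_t)$) and the translation of these vanishing statements into exactness of the augmented complex $\cdots\to\bigwedge^2(R^t)\xrightarrow{\partial}R^t\xrightarrow{\eta}R\xrightarrow{\pi}R/(g_1,\ldots,g_t)\to 0$, with $d_2^f=\partial$ and $d_1^f=\eta$, are all carried out correctly.
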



\begin{proof}
 Proposition ($5$) in \cite{Bourbaki} (page $157$).
 
\end{proof}


\begin{proposition}
\label{regvseq}
Let $R$ be an affine algebra of Krull dimension $n$, and 
$g_1,g_2,\ldots,g_t$ be a sequence of elements in $R$ with $0\leq t\leq n$. 

\begin{enumerate}[i.]
 \item 
If $R$ is graded and $g_1,g_2,\ldots,g_t$ are homogeneous, then
$g_1,g_2,\ldots,g_t$ is regular in $R$ if and only if the sequence
$g_1,g_2,\ldots,g_t$ is a complete intersection for $R$. 
 
 \item If $R$ is a Cohen-Macaulay algebra, then the sequence $g_1,g_2,\ldots,g_t$ is a complete intersection for $R$ if and only if the variety $V(g_1,g_2,\ldots,g_t)$ is equidimensional of dimension $n-t$.
\end{enumerate}
\end{proposition}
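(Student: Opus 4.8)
The plan is to deduce both statements from the Koszul-complex machinery already assembled, together with standard commutative-algebra facts about depth, grade, and the behaviour of Cohen-Macaulay rings.

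For part (i), the key identity is that for a sequence $g_1,\dots,g_t$ in a Noetherian ring $R$, the Koszul complex $K_\bullet(\mathbf{g})$ computes the grade of the ideal $(g_1,\dots,g_t)$: more precisely, $H_n(\mathbf{g},R)=0$ for all $n>0$ if and only if the grade of $(g_1,\dots,g_t)$ equals $t$, and in a Noetherian ring a sequence generating an ideal of grade $t$ can, after a suitable regrouping, be reordered to a regular sequence. In the graded setting the obstruction to such reordering disappears: by Proposition \ref{regperm} any permutation of a homogeneous regular sequence of positive degree is again regular, so ``some permutation is regular'' is equivalent to ``the given sequence is regular.'' Thus I would argue: if $g_1,\dots,g_t$ is regular then by Proposition \ref{defKoszul} it is a complete intersection for $R$; conversely, if it is a complete intersection, then $\mathrm{grade}(g_1,\dots,g_t)=t$, hence $g_1,\dots,g_t$ is (up to permutation, hence by Proposition \ref{regperm} as given) a regular sequence. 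The homogeneity and positive-degree hypotheses are exactly what make the vanishing of Koszul homology symmetric in the $g_i$, so the ``if'' direction goes through.

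For part (ii), assume $R$ is Cohen-Macaulay of Krull dimension $n$. If $g_1,\dots,g_t$ is a complete intersection for $R$, then by part (i)'s underlying grade computation (valid without the grading hypothesis once we invoke Cohen-Macaulayness) the sequence is regular, so $R/(g_1,\dots,g_t)$ has Krull dimension $n-t$; moreover a quotient of a Cohen-Macaulay ring by a regular sequence is again Cohen-Macaulay, and a Cohen-Macaulay ring is equidimensional (indeed unmixed), so $V(g_1,\dots,g_t)=\Sp\big(R/(g_1,\dots,g_t)\big)$ is equidimensional of dimension $n-t$. Conversely, if $V(g_1,\dots,g_t)$ is equidimensional of dimension $n-t$, then $\dim R/(g_1,\dots,g_t)=n-t$, so $g_1,\dots,g_t$ is a system of parameters up to radical; in a Cohen-Macaulay ring every such sequence of length $t$ dropping the dimension by $t$ is a regular sequence (this is the defining property that $\mathrm{depth}=\dim$ on the relevant localizations, equivalently that part of a system of parameters is a regular sequence), and then Proposition \ref{defKoszul} gives that $g_1,\dots,g_t$ is a complete intersection for $R$.

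The main obstacle is the converse direction in (ii): passing from the purely dimension-theoretic hypothesis ``$V(g_1,\dots,g_t)$ is equidimensional of dimension $n-t$'' to the algebraic conclusion that $g_1,\dots,g_t$ is an honest regular sequence. This is where Cohen-Macaulayness is essential and cannot be dropped — one needs the theorem that in a Cohen-Macaulay ring a sequence $g_1,\dots,g_t$ with $\dim R/(g_1,\dots,g_t)=\dim R - t$ is automatically regular, which rests on the equality of depth and dimension in Cohen-Macaulay local rings and an induction on $t$ localizing at minimal primes of the successive quotients. I would cite this from a standard reference (e.g. Matsumura, \cite{Mats89}, or Bruns–Herzog) rather than reprove it, and then the complete-intersection conclusion is immediate from Proposition \ref{defKoszul}.
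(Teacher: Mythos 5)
The paper itself gives no argument here: its ``proof'' is a citation to Proposition (2.1) of \cite{FutOvs05}, so you are supplying reasoning the paper omits. Your forward direction of (i) (regular $\Rightarrow$ complete intersection, via Proposition \eqref{defKoszul}) is fine, and your general plan of translating ``complete intersection'' into the statement that the ideal $(g_1,\ldots,g_t)$ has grade $t$ (depth sensitivity of the Koszul complex) is the right one. The genuine gaps occur every time you pass back from a grade or dimension condition to regularity \emph{of the given sequence in the given order}. In (i)$\Leftarrow$ you assert that a sequence generating an ideal of grade $t$ ``can, after a suitable regrouping, be reordered to a regular sequence''; this is not a standard theorem, you neither prove nor cite it, and it is doing all the work. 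The correct mechanism in the graded case is different: the $g_i$ are homogeneous of positive degree, hence lie in the irrelevant maximal ideal, and the graded analogue of the local theorem ``$H_1=0$ plus $g_i$ in the radical implies $g_1,\ldots,g_t$ regular'' (graded Nakayama) gives regularity directly, with no appeal to Proposition \eqref{regperm}.

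In (ii) the corresponding steps are not just unproved but false as stated. In the Cohen--Macaulay ring $\k[x,y,z]$ take $g_1=y(1-x)$, $g_2=z(1-x)$, $g_3=x$: the Koszul homology vanishes in positive degrees (this sequence is a permutation of the regular sequence $x,\,y(1-x),\,z(1-x)$, and Koszul homology is insensitive to the order), and $\dim \k[x,y,z]/(g_1,g_2,g_3)=0=3-3$, yet the sequence is not regular, since $z(1-x)\cdot y\in\bigl(y(1-x)\bigr)$ while $y\notin\bigl(y(1-x)\bigr)$. So neither ``complete intersection $\Rightarrow$ regular (using CM, without grading)'' nor ``dimension drop by $t$ in a CM ring $\Rightarrow$ regular sequence'' holds globally; both are local statements. (Likewise ``a Cohen--Macaulay ring is equidimensional'' fails non-locally, e.g.\ for $\k[x]\times\k[y,z]$.) The conclusions of (ii) are still reachable, but only by arguing with grade and height: Cohen--Macaulayness gives $\operatorname{grade}(g_1,\ldots,g_t)=\operatorname{ht}(g_1,\ldots,g_t)$, depth sensitivity identifies ``complete intersection'' with ``grade $=t$'', and unmixedness together with catenarity of affine algebras (plus equidimensionality of $\Sp R$ itself, which the statement tacitly assumes) yields equidimensionality of $V(g_1,\ldots,g_t)$ of dimension $n-t$, and conversely; at no point may you claim that $g_1,\ldots,g_t$ is a regular sequence in $R$, which is exactly what the counterexample excludes.
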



\begin{proof} 
Proposition ($2.1$) in \cite{FutOvs05}.

\end{proof}


\begin{proposition}
\label{projhyper}
Let $R=\k[X_1,X_2,\ldots,X_n]$ be a polynomial algebra, and let
$G_1,G_2,\ldots,G_t\in R$. The sequence 
$$X_1,X_2,\ldots,X_r,G_1,G_2,\ldots,G_t$$
is a complete intersection for $R$ if and only if the sequence
$g_1,g_2,\ldots,g_t$ is a complete intersection for
$\k[X_{r+1},X_{r+2},\ldots,X_{n}]$, where 
$$g_i(X_{r+1},X_{r+2},\ldots,X_{n})=G_i(0,0,\ldots,0,X_{r+1},X_{r+2},\ldots,X_{n}),\ \ \forall i=1,2,\ldots,t.$$
\end{proposition}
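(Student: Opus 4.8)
\section*{Proof proposal}

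The plan is to identify, degree by degree, the Koszul homology of the long sequence $\{X_1,\dots,X_r,G_1,\dots,G_t\}$ over $R$ with that of $\{g_1,\dots,g_t\}$ over $\k[X_{r+1},\dots,X_n]$; once this is done the equivalence of the two complete-intersection conditions is immediate, since by definition each only asks for the vanishing of $H_n$ in all degrees $n>0$.

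First I would invoke the standard description of the Koszul complex of a concatenation as a tensor product of Koszul complexes: for finite sequences $\mathbf{a},\mathbf{b}$ of elements of $R$ one has $K_\bullet(\mathbf{a}\cup\mathbf{b})\cong K_\bullet(\mathbf{a})\otimes_R K_\bullet(\mathbf{b})$ as complexes of $R$-modules. Taking $\mathbf{a}=\{X_1,\dots,X_r\}$ and $\mathbf{b}=\{G_1,\dots,G_t\}$ gives
$$K_\bullet\big(\{X_1,\dots,X_r,G_1,\dots,G_t\}\big)\ \cong\ K_\bullet\big(\{X_1,\dots,X_r\}\big)\otimes_R K_\bullet\big(\{G_1,\dots,G_t\}\big).$$
Now $X_1,\dots,X_r$ is a regular sequence of $R=\k[X_1,\dots,X_n]$, so by Proposition \eqref{defKoszul} the augmented complex $K_\bullet(\{X_1,\dots,X_r\})\to R/(X_1,\dots,X_r)$ is exact; that is, $K_\bullet(\{X_1,\dots,X_r\})$ is quasi-isomorphic to the quotient $R/(X_1,\dots,X_r)\cong\k[X_{r+1},\dots,X_n]$ placed in degree $0$. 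Because $K_\bullet(\{G_1,\dots,G_t\})$ is a bounded complex of free, hence flat, $R$-modules, tensoring the above quasi-isomorphism with it yields a quasi-isomorphism
$$K_\bullet\big(\{X_1,\dots,X_r,G_1,\dots,G_t\}\big)\ \xrightarrow{\ \sim\ }\ \big(R/(X_1,\dots,X_r)\big)\otimes_R K_\bullet\big(\{G_1,\dots,G_t\}\big)\ =\ K_\bullet\big(\{g_1,\dots,g_t\}\big),$$
the right-hand side being the Koszul complex over $\k[X_{r+1},\dots,X_n]$, because the image of $G_i$ in $R/(X_1,\dots,X_r)$ is exactly $g_i$. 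Passing to homology gives $H_n(\{X_1,\dots,X_r,G_1,\dots,G_t\},R)\cong H_n(\{g_1,\dots,g_t\},\k[X_{r+1},\dots,X_n])$ for every $n$, and in particular the left-hand groups vanish for all $n>0$ precisely when the right-hand ones do, which is the assertion.

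The one point that deserves care is the homological step: that tensoring the quasi-isomorphism $K_\bullet(\{X_1,\dots,X_r\})\to R/(X_1,\dots,X_r)$ with the flat complex $K_\bullet(\{G_1,\dots,G_t\})$ is again a quasi-isomorphism. I would either cite this as a standard fact about bounded complexes of flat modules, or establish it by induction on $r$, adjoining one variable at a time: permuting the entries of a Koszul sequence changes the complex only up to isomorphism, so one may bring the newly adjoined $X_i$ to the front; it is a non-zero-divisor on $R/(X_1,\dots,X_{i-1})$ and hence on every term of the pertinent Koszul complex over that ring, and the mapping-cone long exact sequence for multiplication by $X_i$ then collapses to the required degree-wise isomorphism. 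Everything else is formal.
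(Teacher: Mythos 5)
Your argument is correct, and it is worth noting that the paper itself gives no proof of Proposition \eqref{projhyper}: it only cites Lemma (2.2) of \cite{FutOvs05}, so your proposal supplies an actual argument where the paper has a reference. Your route is the natural homological one: factor the Koszul complex of the concatenated sequence as $K_\bullet(X_1,\ldots,X_r)\otimes_R K_\bullet(G_1,\ldots,G_t)$, use that $X_1,\ldots,X_r$ is a regular sequence so that $K_\bullet(X_1,\ldots,X_r)$ is a resolution of $R/(X_1,\ldots,X_r)\cong\k[X_{r+1},\ldots,X_n]$ (precisely the Koszul resolution of Proposition \eqref{defKoszul}), tensor this quasi-isomorphism with the bounded complex of free $R$-modules $K_\bullet(G_1,\ldots,G_t)$, and observe that Koszul complexes commute with base change, so the target is $K_\bullet(g_1,\ldots,g_t)$ over $\k[X_{r+1},\ldots,X_n]$. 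This yields $H_n(X_1,\ldots,X_r,G_1,\ldots,G_t;R)\cong H_n(g_1,\ldots,g_t;\k[X_{r+1},\ldots,X_n])$ for every $n$, which is stronger than, and immediately implies, the stated equivalence; moreover it needs neither homogeneity nor regularity of the $G_i$, matching the generality of the statement. The only step requiring justification is the one you single out, that tensoring a quasi-isomorphism with a bounded complex of flat (here free) modules is again a quasi-isomorphism, and both of your proposed justifications are sound: either the general fact (the cone of the map is acyclic, and an acyclic complex remains acyclic after tensoring with a bounded complex of flats, by induction on its length via the degreewise split truncation filtration), or the induction on $r$, where the precise standard lemma behind your ``collapses'' remark is that a degreewise injective chain map has cone quasi-isomorphic to its cokernel, applied to multiplication by $X_i$ on the Koszul complex of the remaining elements over $\k[X_i,\ldots,X_n]$. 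Spelling out one of these two justifications explicitly is all that is needed to make the write-up complete.
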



\begin{proof} 
Lemma ($2.2$) in \cite{FutOvs05}.

\end{proof}



\section{A version of the Ovsienko's Theorem}


From now on we fix some positive integer $n$. 


\subsection{Gelfand-Tsetlin subalgebra}
By $\gl_n$ we denote the general linear Lie algebra consisting of all $n\times n$ matrices over $\k$, and by $\{E_{ij}\mid 1\leq i,j \leq n\}$ the standard basis of $\gl_n$ of elementary matrices. For $m\leq n$, let $\gl_m$ be the Lie subalgebra of $\gl_n$ spanned by $\{E_{ij}\mid 1\leq i,j \leq m\}$ and denote by $Z_m:=Z(U(\gl_m))$ the center of $U(\gl_m)$. We have the chain of Lie subalgebras of $\gl_n$
$$\gl_1\subset\gl_2\subset\cdots\subset\gl_{n-1}\subset\gl_n$$
and the induced chain of subalgebras of $U(\gl_n)$
$$U(\gl_1)\subset U(\gl_2)\subset\cdots\subset U(\gl_{n-1})\subset U(\gl_n).$$


\begin{definition}
The \textbf{Gelfand-Tsetlin subalgebra $\Gamma$ of} $U(\gl_n)$ is defined to be the subalgebra $\Gamma$ of $U(\gl_n)$ generated by $\left\{Z_1,Z_2,\ldots,Z_n\right\}$.
\end{definition}


\begin{proposition}[\v Zelobenko, 1973]
For any $m\in\left\{1,2,\ldots,n\right\}$, the center $Z_m$ is a polynomial algebra in $m$ variables $\left\{\gamma_{mj}\ :\ j=1,2,\ldots,m\right\}$, with
 $$\gamma_{ij}=\sum_{t_1,t_2,\ldots,t_j\in \left\{1,2,\ldots,i\right\}}{E_{t_1t_2}E_{t_2t_3}\cdots E_{t_{j-1}t_j}E_{t_jt_1}}.$$
The subalgebra $\Gamma$ is a polynomial algebra in $\frac{n(n+1)}{2}$ variables $\left\{\gamma_{ij}\ :\ 1\leq j\leq i\leq n\right\}$.
\end{proposition}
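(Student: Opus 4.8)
The plan is to prove the two assertions separately, in each case reducing to a statement about the symbols of the $\gamma_{ij}$ in the PBW-graded algebra $\gr U(\gl_n)=S(\gl_n)=\k[X_{ab}:1\le a,b\le n]$, the coordinate ring of $M_n(\k)$; here $X^{(i)}$ denotes the generic $i\times i$ matrix sitting in the top-left corner. A direct inspection of the defining sum shows that $\gamma_{ij}$ has PBW degree exactly $j$ and symbol $\overline{\gamma_{ij}}=\tr\big((X^{(i)})^{\,j}\big)$, the sum of $j$-th powers of the eigenvalues of the $i$-th leading principal submatrix.

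For the statement on $Z_m$: by the Harish-Chandra and Chevalley restriction isomorphisms one knows that $Z_m$ is abstractly a polynomial algebra in $m$ variables (in particular a domain of Krull dimension $m$) and that $\gr Z_m=S(\gl_m)^{\gl_m}$, the ring of $\mathrm{GL}_m$-conjugation-invariant functions on $M_m(\k)$. The latter is classically $\k[\tr Y,\tr Y^2,\dots,\tr Y^m]$ for the generic matrix $Y$ (invariant theory of one matrix, together with Newton's identities in characteristic zero), which is exactly the subalgebra generated by $\overline{\gamma_{m1}},\dots,\overline{\gamma_{mm}}$. Hence these symbols generate $\gr Z_m$, so $\gamma_{m1},\dots,\gamma_{mm}$ generate $Z_m$ by the standard filtered-graded lifting; and a surjection from a polynomial ring in $m$ variables onto a domain of Krull dimension $m$ is an isomorphism, so $Z_m=\k[\gamma_{m1},\dots,\gamma_{mm}]$.

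For the statement on $\Gamma$: since $\Gamma$ is generated by $Z_1,\dots,Z_n$, it is generated by the $\tfrac{n(n+1)}{2}$ elements $\{\gamma_{ij}:1\le j\le i\le n\}$, and it suffices to prove these are algebraically independent. Passing to symbols (algebraic independence in $\gr U(\gl_n)$ implies it in $U(\gl_n)$), it is enough to show that the morphism $\mu\colon M_n(\k)\to\prod_{i=1}^n\k^i$ recording the coefficients of the characteristic polynomials of $X^{(1)},\dots,X^{(n)}$ is dominant, equivalently that its Jacobian attains rank $\tfrac{n(n+1)}{2}$ at some point. I would argue by induction on $n$. The functions $\overline{\gamma_{ij}}$ with $i\le n-1$ involve only the coordinates of the leading $(n-1)\times(n-1)$ block $B$ and, by induction, have independent differentials at a general $B$; writing a matrix as $A=\left(\begin{smallmatrix}B&u\\ v^{\mathrm t}&w\end{smallmatrix}\right)$ with $u,v\in\k^{n-1}$, $w\in\k$, the Jacobian of all $\tfrac{n(n+1)}{2}$ functions then splits into block lower-triangular form, so it is enough to check that at a suitable point the $n\times(2n-1)$ block $\big(\partial\,\overline{\gamma_{nj}}/\partial(u,v,w)\big)$ has rank $n$. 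Fixing $B$ with pairwise distinct eigenvalues $\lambda_1,\dots,\lambda_{n-1}$, the Schur-complement cofactor expansion gives $\chi_A(t)=(t-w)\chi_B(t)-v^{\mathrm t}\operatorname{adj}(tI-B)u$; diagonalizing $B$ and writing $c_k\in\k$ for the resulting bilinear coordinates in $(u,v)$, which take arbitrary values, this becomes $\chi_A(t)=t\,\chi_B(t)-w\,\chi_B(t)-\sum_{k=1}^{n-1}c_k\prod_{l\ne k}(t-\lambda_l)$. Because the $\lambda_k$ are distinct, $\{\prod_{l\ne k}(t-\lambda_l)\}_{k=1}^{n-1}$ is the Lagrange basis of $\k[t]_{\le n-2}$, and together with $\chi_B(t)$ (occurring with the free coefficient $-w$) it spans $\k[t]_{\le n-1}$; hence $(u,v,w)\mapsto\chi_A(t)$ is an affine-linear surjection onto the space of all monic polynomials of degree $n$, so the block has rank $n$. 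Choosing $B$ both with distinct eigenvalues and in the inductive full-rank locus (each condition open and dense, hence jointly satisfiable), the full Jacobian has rank $\tfrac{(n-1)n}{2}+n=\tfrac{n(n+1)}{2}$, and we conclude that $\Gamma\cong\k[\gamma_{ij}:1\le j\le i\le n]$.

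The main obstacle is the second assertion, and within it the algebraic independence of the ``corner'' symbols --- concretely, the fact that one can prescribe the characteristic polynomials of all leading principal submatrices of a matrix independently. The mechanism that makes the induction go through is the identity $\operatorname{adj}(tI-B)=\chi_B(t)(tI-B)^{-1}$ for regular semisimple $B$, which in an eigenbasis produces exactly the Lagrange interpolation basis, so that the last row, column and diagonal entry contribute a full-dimensional family of characteristic polynomials for $A$. Everything else is either classical (Harish-Chandra, Chevalley, the invariant theory of one matrix) or routine filtered-graded bookkeeping; a possible alternative to the genericity argument would be to exhibit one explicit matrix --- a ``staircase'' of companion-type blocks --- at which the Jacobian is visibly triangular with nonzero diagonal, but the regular semisimple computation is cleaner to state.
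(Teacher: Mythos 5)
For this proposition the paper gives no argument at all --- its ``proof'' is the citation to \v Zelobenko's book (page 169) --- so any genuine proof you supply is automatically a different route. Your sketch is essentially the standard argument and, as far as I can check, it is correct: the symbol of $\gamma_{ij}$ in $\gr U(\gl_n)$ is $\tr\bigl((X^{(i)})^j\bigr)$; reductivity gives $\gr Z_m=S(\gl_m)^{\gl_m}$, which by the invariant theory of one matrix together with Newton's identities (characteristic zero) is the polynomial ring on $\tr Y,\dots,\tr Y^m$, and the filtered--graded lifting plus the Krull-dimension count yields $Z_m=\k[\gamma_{m1},\dots,\gamma_{mm}]$; for $\Gamma$, independence of the symbols is reduced by the Jacobian criterion to dominance of the map recording the characteristic polynomials of all leading principal submatrices, and your Schur-complement/Lagrange-basis induction does show that the corner block has rank $n$ at a matrix whose leading block is regular semisimple and lies in the inductive full-rank locus (the two conditions being open and dense, they are jointly satisfiable, and conjugating $B$ to diagonal form is harmless because it is an invertible linear change of the variables $u,v$ only). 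Two small points to tighten in a written version: (i) the map $(u,v,w)\mapsto\chi_A$ is not literally affine-linear --- it is affine-linear only in the coordinates $c_k=\tilde u_k\tilde v_k$ and $w$ --- so you should say that at a point with all $\tilde u_k\neq 0$ the differential of $(u,v,w)\mapsto(c_1,\dots,c_{n-1},w)$ is surjective, whence the composite has rank $n$; (ii) the two filtered--graded transfers you invoke (that $\gr\bigl(U(\gl_m)^{\gl_m}\bigr)=S(\gl_m)^{\gl_m}$, and that algebraic independence of the symbols implies algebraic independence of the $\gamma_{ij}$ themselves, via the top weighted-homogeneous component with weight $j$ assigned to $\gamma_{ij}$) each deserve a sentence, though both are routine. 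Compared with the paper, your route buys a self-contained proof and, as a by-product, the algebraic independence of the $\overline\gamma_{ij}$ in the polynomial algebra, which is precisely the kind of statement the paper later exploits for the Gelfand-Tsetlin variety; the cost is the reliance on classical inputs (Harish-Chandra, Chevalley restriction, invariants of one matrix) that the cited reference develops from scratch.
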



\begin{proof}
 See \cite{Zelobenko} page $169$.

\end{proof}


\begin{remark}
\label{contatraco}
Consider the matrix 

$$E:=\left(\begin{matrix}
 E_{11} & E_{12} & \cdots & E_{1n}\\
 E_{21} & E_{22} & \cdots & E_{2n}\\
 \vdots & \vdots & \ddots & \vdots\\
 E_{n1} & E_{n2} & \cdots & E_{nn}
\end{matrix}\right)\in M_n\left(U(\gl_n)\right)$$
and denote by $E_i$ the $i\times i$ submatrix in the upper left corner of $E$ for $i\in\left\{1,2,\ldots,n\right\}$.
 
\noindent Therefore, for $1\leq j\leq i\leq n$ 
$$\gamma_{ij}=\tr(E_i^j)=\sum_{t_1=1}^{i}\sum_{t_2=1}^{i}\cdots\sum_{t_j=1}^{i}{E_{t_1t_2}E_{t_2t_3}\cdots E_{t_{j-1}t_{j}}E_{t_jt_1}},$$
where $\tr$ denotes the trace function.
\end{remark}


\begin{example}
The Gelfand-Tsetlin subalgebra $\Gamma$ for $U(\gl_3)$ is generated by 
\begin{align*} 
\gamma_{11}=&E_{11},\\
\gamma_{21}=&E_{11}+E_{22},\\
\gamma_{22}=&E_{11}^2+E_{12}E_{21}+E_{21}E_{12}+E_{22}^2,\\
\gamma_{31}=&E_{11}+E_{22}+E_{33},\\
\gamma_{32}=&E_{11}^2+E_{12}E_{21}+E_{13}E_{31}+E_{21}E_{12}+E_{22}
^2+E_{23}E_{32}+E_{31}E_{13}+E_{32}E_{23}+E_{33}^2,\\
\gamma_{33}=&E_{11}^3+E_{11}E_{12}E_{21}+E_{11}E_{13}E_{31}+E_{12}E_{21}E_{11}+E_{12}E_{22}E_{21}+E_{12}E_{23}E_{31}+\\
	    &+E_{13}E_{31}E_{11}+E_{13}E_{32}E_{21}+E_{13}E_{33}E_{31}+E_{21}E_{11}E_{12}+E_{21}E_{12}E_{22}+\\
	    &+E_{21}E_{13}E_{32}+E_{22}E_{21}E_{12}+E_{22}^3+E_{22}E_{23}E_{32}+E_{23}E_{31}E_{12}+ E_{23}E_{32}E_{22}+\\
	    &+E_{23}E_{33}E_{32}+E_{31}E_{11}E_{13}+E_{31}E_{12}E_{23}+E_{31}E_{13}E_{33}+E_{32}E_{21}E_{13}+\\
	    &+E_{32}E_{22}E_{23}+E_{32}E_{23}E_{33}+E_{33}E_{31}E_{13}+E_{33}E_{32}E_{23}+E_{33}^3.\\
\end{align*}

\end{example}


\begin{remark}
 For other generators of the Gelfand-Tsetlin subalgebra $\Gamma$ see \cite{Gelfandetal} or \cite{Molevbook} (pages $246-250$).
\end{remark}


\subsection{Gelfand-Tsetlin variety}

Clearly, the element $\gamma_{ij}$ can be viewed as a polynomial in noncommutative variables $E_{ij}$'s. But, by the Poincare-Birkhoff-Witt Theo\-rem the graded algebra $\gr(U(\gl_n))$ is a polynomial algebra in variables $\overline{E}_{ij}$'s 
 $$\gr(U(\gl(n)))\cong\k\left[\overline{E}_{ij} : i,j=1,2,\ldots ,n\right].$$  

\noindent Therefore, the elements $\overline{\gamma}_{ij}$ are polynomials in commutative variables $\overline{E}_{ij}$'s and considering the notation $X_{ij}:=\overline{E}_{ij}$ for $i,j=1,2,\ldots ,n$  
 $$\overline{\gamma}_{ij}=\sum_{t_1,t_2,\ldots,t_j\in \left\{1,2,\ldots,i\right\}}{X_{t_1t_2}X_{t_2t_3}\cdots X_{t_{j-1}t_j}X_{t_jt_1}}.$$


\begin{definition}
The \textbf{Gelfand-Tsetlin variety for} $\gl_n$ is the algebraic variety 
$$V\left(\left\{\overline{\gamma}_{ij} : i=1,2,\ldots ,n;\ j=1,2,\ldots ,i \right\}\right)\subset \k^{n^2}.$$
\end{definition}
 
 
\begin{theorem}[\textbf{Ovsienko's Theorem}, $2003$]
\label{tmaOvs}
\noindent

The Gelfand-Tsetlin variety for $\gl_n$ is equidimensional of dimension $\frac{n(n-1)}{2}$.
\end{theorem}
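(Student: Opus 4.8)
The strategy is to realize the Gelfand--Tsetlin variety as the zero set of a sequence of $\tfrac{n(n+1)}{2}$ homogeneous polynomials in the polynomial ring $R=\k[X_{ij}\ :\ 1\le i,j\le n]$, which has Krull dimension $n^2$, and to show that this sequence is a regular sequence (equivalently, by Proposition \eqref{regvseq}(i), a complete intersection for $R$). Since $R$ is Cohen--Macaulay, Proposition \eqref{regvseq}(ii) will then immediately give that the variety is equidimensional of dimension $n^2-\tfrac{n(n+1)}{2}=\tfrac{n(n-1)}{2}$, which is exactly the claim. So the entire content is to prove that $\overline\gamma_{11},\overline\gamma_{21},\overline\gamma_{22},\ldots,\overline\gamma_{n1},\ldots,\overline\gamma_{nn}$ form a regular sequence.

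\textbf{Reduction via the partial Kostant--Wallach maps.} I would proceed by induction on $n$, peeling off the variables involving the last row and column, $X_{in}$ and $X_{nj}$ for $i,j<n$, together with $X_{nn}$. The generators $\overline\gamma_{ij}$ for $i<n$ only involve the variables $X_{k\ell}$ with $k,\ell\le n-1$, so they are precisely the defining equations of the Gelfand--Tsetlin variety for $\gl_{n-1}$ inside $\k^{(n-1)^2}$; by induction these are a regular sequence for $\k[X_{k\ell}\ :\ k,\ell\le n-1]$. The new generators are $\overline\gamma_{n1}=\sum_i X_{ii}$, $\overline\gamma_{n2},\ldots,\overline\gamma_{nn}$, the traces of powers of the full $n\times n$ matrix $X=(X_{ij})$; equivalently, up to an invertible triangular change of generators, the coefficients of the characteristic polynomial of $X$. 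The task is to show that adjoining these $n$ polynomials keeps the sequence regular. By Proposition \eqref{regperm} we may reorder, and by Proposition \eqref{projhyper} we can analyze specializations; the plan is to use the fibration structure of the Kostant--Wallach map $\Phi\colon M_n\to\k^{n(n+1)/2}$, whose zero fiber is exactly this variety (this is the content flagged as Corollary \eqref{KWvsGTs}), together with a dimension count on generic fibers of the maps forgetting the last $\gl_{k}$-block.

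\textbf{The key dimension estimate.} Concretely, regularity of the full sequence is equivalent (via Proposition \eqref{regvseq}) to the variety having the expected dimension $\tfrac{n(n-1)}{2}$, and since it always contains the irreducible piece $V(\{X_{ij}:i\le j\})$ of that dimension, it suffices to bound $\dim$ from above. I would stratify $M_n$ by the nilpotent $\gl_{n-1}$-part: on the locus where the top-left $(n-1)\times(n-1)$ block $X_{n-1}$ is regular nilpotent, a direct computation (completing $X$ to a matrix with $\overline\gamma_{nj}=0$) shows the fiber over $0$ has exactly the expected dimension, and on the degenerate strata one controls the defect using the inductive hypothesis applied to $\gl_{n-1}$ together with the fact that the extra equations $\overline\gamma_{nj}$ cut transversally enough. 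The main obstacle --- and the place where real work is needed --- is precisely this last point: showing that no irreducible component of the $\gl_{n-1}$ Gelfand--Tsetlin variety, when ``thickened'' by the last row/column variables, can fail to be cut down by the full $n$ new equations to the expected codimension. This is where Ovsienko's original argument used a delicate combinatorial induction on ``forks''; here the plan is to replace that by the more conceptual fibration/specialization argument above, verifying the base cases $n\le 4$ directly (as is done in Section \eqref{} of the paper) and propagating via Propositions \eqref{regperm}, \eqref{projhyper} and \eqref{regvseq}.
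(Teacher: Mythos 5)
Your reduction is sound as far as it goes: the Gelfand--Tsetlin variety is cut out by $\frac{n(n+1)}{2}$ polynomials in $\k^{n^2}$, so by Krull's principal ideal theorem every irreducible component has dimension at least $\frac{n(n-1)}{2}$, and hence it suffices to prove the upper bound, equivalently (Proposition \eqref{regvseq}) that the $\overline{\gamma}_{ij}$ form a regular sequence. The gap is that your inductive step is never actually carried out. Granting the inductive hypothesis for $\gl_{n-1}$, what must be shown is that the $n$ new polynomials $\overline{\gamma}_{n1},\ldots,\overline{\gamma}_{nn}$ stay regular modulo the ideal generated by the earlier ones, i.e.\ that they cut every irreducible component of the thickened variety (a component of the $\gl_{n-1}$ Gelfand--Tsetlin variety crossed with the $2n-1$ new coordinates $X_{in},X_{nj},X_{nn}$) down by exactly $n$. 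Your treatment of this --- a Kostant--Wallach-type computation on the stratum where the $(n-1)\times(n-1)$ block is regular nilpotent, and on the degenerate strata the assertion that the new equations ``cut transversally enough'' --- names the difficulty but gives no argument for it. A priori, an irreducible component of an intermediate cut could be entirely contained in the zero locus of a later $\overline{\gamma}_{nj}$ (this is exactly how equidimensionality fails in nearby situations), and excluding that possibility for every component, including the highly non-regular ones, is precisely the content of Ovsienko's combinatorial induction. So what you have is a plan whose crucial lemma is the theorem itself; ``verify $n\le 4$ and propagate via Propositions \eqref{regperm}, \eqref{projhyper}, \eqref{regvseq}'' does not propagate, because those propositions only convert regularity into equidimensionality and permit specializations --- they do not produce the degenerate-stratum dimension estimate.

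For comparison, note that the paper does not reprove Theorem \eqref{tmaOvs} either: its proof is the citation \cite{Ovs}. What the paper establishes with the elementary specialization technique you are echoing (Corollary \eqref{regsubseq} together with Propositions \eqref{regvseq} and \eqref{projhyper}) is only the weak version, Theorem \eqref{fracagln}, which concerns the subvariety $V_n$ cut out by the $n-1$ polynomials $\sigma_{nj}$ in the smaller space $\k^{e(n)}$, plus the cases $n=2,3,4$ by explicit decompositions; the paper explicitly describes the implication from the weak version to the full theorem as a belief, not a result. If you want a genuine proof along your lines, the missing ingredient you must supply is exactly the statement that each $\overline{\gamma}_{nj}$ is a nonzerodivisor on (every component of) the variety defined by the previous generators --- and at present nothing in your proposal, or in the paper's toolkit, delivers that.
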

 
 
\begin{proof}
 See \cite{Ovs}.
 
\end{proof}


\subsection{Weak version of Gelfand-Tsetlin variety}


\begin{notation}
\label{notacPX}
\noindent

\begin{enumerate}[i.]
  \item For $t\in\NN$, we will write $d(t)=\frac{t(t+1)}{2}$ and $e(t)=\frac{(t+2)(t-1)}{2}$.
  \item Let $P\in\k[x_1,x_2,\dots,x_n]$ be a polynomial and 
$\textbf{X}=\{x_{i_1},x_{i_1},\dots,x_{i_r}\}$ be a set of variables. We will denote by $P^{\textbf{X}}$ the polynomial obtained from $P$ by substituting $x_{i_1}=x_{i_2}=\cdots=x_{i_r}=0$. 
  \item For $n\in\NN$, define the set of variables 
 $$I_{n}:=\left\{X_{ij}:n\geq i>j\geq 1\right\}\cup\left\{X_{in}:i=1,2,\ldots,n-1\right\}.$$
  \item For $t\in\NN$ with $t\leq n$, define the set $I_n^{(t)}:=\left\{X_{ij}\in I_n:i,j\neq t\right\}.$

\end{enumerate}
\end{notation}


\begin{theorem}
\label{fracagln}
\noindent

The variety
 $$V_n:=V\left(\sigma_{n2},\sigma_{n3},\sigma_{n4},\ldots,\sigma_{nn}\right)\subset\k^{n^2-n-d(n-2)}=\k^{e(n)}$$
is equidimensional of dimension $\dim\left(V_n\right)=e(n)-\left(n-1\right)=d(n-1),$ where
 $$\sigma_{nj}=\sum_{n>t_1>t_2>\cdots>t_{j-1}\geq 1}{X_{nt_1}X_{t_1t_2}\cdots X_{t_{j-2}t_{j-1}}X_{t_{j-1}n}};\ \ \ j=2,3,\ldots,n.$$
\end{theorem}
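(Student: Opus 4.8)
The plan is first to recast the statement as a regularity assertion. Since $\k[I_n]$ is a polynomial ring — hence a graded Cohen-Macaulay algebra of Krull dimension $e(n)$ — and the $\sigma_{nj}$ are homogeneous of positive degree with $n-1\le e(n)$, Proposition~\ref{regvseq} says that $V_n$ is equidimensional of the claimed dimension (note $e(n)-(n-1)=d(n-1)$) if and only if $\sigma_{n2},\sigma_{n3},\dots,\sigma_{nn}$ is a regular sequence in $\k[I_n]$. Call this statement $(\star_n)$; I would prove it by induction on $n$, the cases $n\le 2$ being immediate since $\sigma_{22}=X_{21}X_{12}$ is a non-zero non-unit in $\k[X_{21},X_{12}]$.

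For the inductive step I would stratify by the distinguished variable $X_{1n}$ and write $V_n=\bigl(V_n\cap V(X_{1n})\bigr)\cup\overline{V_n\setminus V(X_{1n})}$; since every irreducible component of $V_n$ is a component of one of these two closed subsets, it is enough to show each is equidimensional of dimension $d(n-1)$. On $V(X_{1n})$ the last equation collapses: the unique monomial of $\sigma_{nn}$ is $X_{n,n-1}X_{n-1,n-2}\cdots X_{21}X_{1n}$, so $\sigma_{nn}|_{X_{1n}=0}=0$, while for $2\le j\le n-1$ the specialization $\sigma_{nj}|_{X_{1n}=0}$ equals the sum $\beta_{nj}$ of those monomials of $\sigma_{nj}$ not divisible by $X_{1n}$, i.e.\ the terms indexed by $n>t_1>\cdots>t_{j-1}\ge 2$. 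Combinatorially $\beta_{nj}$ is precisely $\sigma_{n-1,j}$ after the relabelling $i\mapsto i-1$ (so that $X_{kn}\mapsto X_{k-1,n-1}$), and it involves none of $X_{1n},X_{21},X_{31},\dots,X_{n1}$; thus $\k[I_n]$ is the polynomial ring over $\k[(\text{relabelled }I_{n-1})]$ in the $n$ variables $X_{1n},X_{21},\dots,X_{n1}$. By Proposition~\ref{projhyper}, killing those $n$ coordinate variables reduces the question of whether $X_{21},\dots,X_{n1},X_{1n},\beta_{n2},\dots,\beta_{n,n-1}$ is a complete intersection for $\k[I_n]$ to whether $\beta_{n2},\dots,\beta_{n,n-1}$ is one for $\k[(\text{relabelled }I_{n-1})]$, and the latter holds by the inductive hypothesis $(\star_{n-1})$; by Propositions~\ref{regperm}, \ref{regvseq} and Corollary~\ref{regsubseq} it follows that $X_{1n},\beta_{n2},\dots,\beta_{n,n-1}$ is a regular sequence of length $n-1$ in $\k[I_n]$. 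Hence $V_n\cap V(X_{1n})=V(X_{1n},\beta_{n2},\dots,\beta_{n,n-1})$ is equidimensional of dimension $e(n)-(n-1)=d(n-1)$.

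For the complementary piece I would use that $X_{n1}$ occurs in the whole sequence only in $\sigma_{n2}$ and there linearly: $\sigma_{n2}=X_{1n}X_{n1}+\beta_{n2}$, with $\beta_{n2}=\sum_{t=2}^{n-1}X_{nt}X_{tn}$ independent of $X_{n1}$. On the open set $\{X_{1n}\ne 0\}$ the equation $\sigma_{n2}=0$ solves for $X_{n1}$, so projecting it away yields an isomorphism of $V_n\cap\{X_{1n}\ne 0\}$ onto the open subset $\{X_{1n}\ne 0\}$ of $V(\sigma_{n3},\dots,\sigma_{nn})\subset\Sp\k[I_n\setminus\{X_{n1}\}]$; equidimensionality and dimension are transported through this isomorphism, so since $V(\sigma_{n3},\dots,\sigma_{nn})$ has $n-2$ equations in the $e(n)-1$ variables of $\k[I_n\setminus\{X_{n1}\}]$, it suffices to prove that $\sigma_{n3},\dots,\sigma_{nn}$ is a regular sequence there — its variety is then equidimensional of dimension $(e(n)-1)-(n-2)=d(n-1)$, and so is $\overline{V_n\setminus V(X_{1n})}$. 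This last statement has the same shape as $(\star_n)$ with the first equation and one variable deleted, and I would prove it by a secondary induction running alongside the outer one: the same $X_{1n}$-stratification sends its $\{X_{1n}=0\}$-part to the \emph{proper subsequence} $\sigma_{n-1,3},\dots,\sigma_{n-1,n-1}$ of the sequence of $(\star_{n-1})$, which is regular by Corollary~\ref{regsubseq}, while on $\{X_{1n}\ne 0\}$ one eliminates the next variable from the next equation and repeats.

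The hard part will be this secondary induction. The obstruction is that, unlike $X_{n1}$ in $\sigma_{n2}$, the next variable one wants to eliminate ($X_{n2}$ from $\sigma_{n3}$, then $X_{n3}$ from $\sigma_{n4}$, and so on) appears with a coefficient that is a \emph{product} of variables — exactly an initial segment $X_{i,i-1}X_{i-1,i-2}\cdots X_{21}$ of the monomial of $\sigma_{nn}$ — rather than the single unit $X_{1n}$, hence is invertible only on a smaller basic open set. Consequently the $\{X_{1n}\ne 0\}$-branch must itself be broken into further strata according to the vanishing of those coefficients, and on each stratum the surviving equations degenerate to other sums indexed by walks avoiding a prescribed pattern. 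The technical core of the argument is the bookkeeping that organizes all these strata, verifies that each one is either empty or a complete intersection of the expected codimension — so that no component of the wrong dimension appears and none is lost — and feeds back the smaller Gelfand-Tsetlin-type varieties furnished by the outer induction through Propositions~\ref{defKoszul}, \ref{regvseq} and \ref{projhyper}.
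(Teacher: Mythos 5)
Your reduction of the statement to the regularity of $\sigma_{n2},\ldots,\sigma_{nn}$ in $\k[I_n]$ is fine, and your treatment of the closed stratum $V_n\cap V(X_{1n})$ is correct and is in fact the same argument the paper uses for its first piece: adjoin the first-column variables, apply Proposition \eqref{projhyper} together with Proposition \eqref{regvseq} and Corollary \eqref{regsubseq}, and identify the specialized polynomials with $\sigma_{n-1,2},\ldots,\sigma_{n-1,n-1}$ under the relabelling $\k[I_{n-1}]\cong\k[I_n^{(1)}]$. The gap is the complementary stratum. You reduce $\overline{V_n\setminus V(X_{1n})}$, by eliminating $X_{n1}$ from $\sigma_{n2}$ over $\{X_{1n}\neq 0\}$, to the claim that $\sigma_{n3},\ldots,\sigma_{nn}$ is a regular sequence in $\k[I_n\setminus\{X_{n1}\}]$, and you propose to prove this by a ``secondary induction'' in which the next variable to eliminate ($X_{n2}$ from $\sigma_{n3}$, with coefficient $X_{21}X_{1n}$; then $X_{n3}$ from $\sigma_{n4}$, with coefficient $X_{32}X_{21}X_{1n}$; and so on) is invertible only after further stratifying by the vanishing of those coefficients. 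You acknowledge that the ``technical core'' is the bookkeeping verifying that each of the resulting strata is empty or a complete intersection of the expected codimension, but that verification is exactly the content of the theorem on that locus and is nowhere carried out: for example on the stratum $X_{21}=0$, $X_{1n}\neq 0$, the equation $\sigma_{nn}$ vanishes identically and the remaining $\sigma_{nj}$ degenerate to sums over paths avoiding the step $2\to 1$, and no inductive hypothesis, relabelling, or dimension count in the proposal shows that these degenerate systems cut out the right codimension (nor that no component of too-large dimension appears there). So the proof is incomplete precisely where the difficulty lies.

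For comparison, the paper avoids the open stratification and the secondary induction altogether by exploiting the full factorization $\sigma_{nn}=X_{n,n-1}X_{n-1,n-2}\cdots X_{21}X_{1n}$: this writes $V_n$ as the union of the $n$ closed pieces $V\left(\sigma_{n2},\ldots,\sigma_{n,n-1},X_{1n}\right)$ and $V\left(\sigma_{n2},\ldots,\sigma_{n,n-1},X_{t,t-1}\right)$ for $t=2,\ldots,n$, and every piece is then handled by the same specialization you used on your first stratum: adjoin the $t$-th row and column of $I_n$ (for $t=n$ only the $(n-1)$-th row, since killing the $n$-th row or column annihilates all the $\sigma_{nj}$ and destroys regularity), apply Corollary \eqref{regsubseq} and Propositions \eqref{regvseq}, \eqref{projhyper}, and identify the result with $V_{n-1}$ via $\k[I_{n-1}]\cong\k[I_n^{(t)}]$. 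Completing your plan would essentially force you to rediscover such a closed decomposition inside each nested open stratum; I recommend replacing the elimination step by the decomposition along the factors of $\sigma_{nn}$ from the start.
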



\begin{remark}
\noindent

\begin{enumerate}[a.]
  \item If we consider the variables matrix $X=\left(X_{ij}\right)_{i,j=1}^n$, then the set $I_n$ can be viewed as a matrix (removing some variables in $X$) formed by the entries of the part strictly lower triangular of $X$ together with the last column of $X$ (without $X_{nn}$) and $I_n^{(m)}$ is obtained from $I_n$ by removing the $m-$th row and the $m-$th column. For example,
 $$I_4=\left(\begin{matrix}
            
                   &        &        & X_{14} \\
            X_{21} &        &        & X_{24} \\
            X_{31} & X_{32} &        & X_{34} \\
            X_{41} & X_{42} & X_{43} &        
           \end{matrix}\right)\ \ \ \ \ \ \mbox{and}\ \ \ \ \ \ I_5^{(1)}=\left(\begin{matrix}
            
                   &        &        & X_{25} \\
            X_{32} &        &        & X_{35} \\
            X_{42} & X_{43} &        & X_{45} \\
            X_{52} & X_{53} & X_{54} & 
            
           \end{matrix}\right).$$
  \item $\sigma_{nj}\in \k[I_n]\ ,\ \ \ \forall j=2,3,\ldots,n.$ 
\end{enumerate}
 
\end{remark}

\vspace{0.5cm}

The motivation for studying the equidimensionality of the variety $V_n$ is due to our belief that the Theorem \eqref{fracagln} implies Ovsienko's Theorem. For instance, $V_2$ is exactly the Gelfand-Tsetlin variety for $\gl_2$ and the Gelfand-Tsetlin variety for $\gl_3$ is the union of $V_3$ and another subvariety which isomorphic to $V_3$. Due to the fact that $V_n$ is a subvariety of the Gelfand-Tsetlin variety, we will call $V_n$  {\bf Weak Version of the Gelfand-Tsetlin variety}.


\begin{proof}
By induction on $n$. Clearly, 
 $$V_2=V(\sigma_{22})=V(X_{21}X_{12})=V(X_{21})\cup V(X_{12})\subset\k^2$$
is equidimensional of $\dim(V(X_{21}))=1=\dim(V(X_{12})).$

Suppose that 
 $$V_{n-1}=V\left(\sigma_{n-1,2},\sigma_{n-1,3},\ldots,\sigma_{n-1,n-1}\right)\subset\k^{e(n-1)}$$
is equidimensional of dimension $\dim\left(V_{n-1}\right)=d(n-2)$, where
 $$\sigma_{n-1,j}=\sum_{n-1>t_1>t_2>\cdots>t_{j-1}\geq 1}{X_{n-1,t_1}X_{t_1t_2}\cdots X_{t_{j-2}t_{j-1}}X_{t_{j-1},n-1}}$$
with $j=2,3,\ldots,n-1$. And we prove that
 $$V_n=V\left(\sigma_{n2},\sigma_{n3},\sigma_{n4},\ldots,\sigma_{nn}\right)\subset\k^{e(n)}$$
is equidimensional of dimension $\dim\left(V_n\right)=e(n)-\left(n-1\right)=d(n-1)$, where
 $$\sigma_{nj}=\sum_{n>t_1>t_2>\cdots>t_{j-1}\geq 1}{X_{nt_1}X_{t_1t_2}\cdots X_{t_{j-2}t_{j-1}}X_{t_{j-1}n}};\ \ \ j=2,3,\ldots,n.$$
Since $\sigma_{nn}=X_{n,n-1}X_{n-1,n-2}\cdots X_{32}X_{21}X_{1n},$ we obtain 
 $$V_n=V\left(\sigma_{n2},\sigma_{n3},\ldots,\sigma_{nn-1},X_{1n}\right)\cup\bigcup_{t=2}^{n}V\left(\sigma_{n2},\sigma_{n3},\ldots,\sigma_{nn-1},X_{tt-1}\right).$$

 For 
$V\left(\sigma_{n2},\sigma_{n3},\ldots,\sigma_{nn-1},X_{1n}\right)\subset\k^{e(n)}$, by Corollary \eqref{regsubseq} and Proposition \eqref{regvseq} it is enough to prove that\footnote{The variables $X_{i1}$ ($i=2,3,\ldots,n$) do not appear in the polynomials $\sigma_{nj}^{\textbf{X}}$ for any $j=2,3,\ldots,n$.}
 $$V_n^{(1)}:=V\left(\sigma_{n2},\sigma_{n3},\ldots,\sigma_{nn-1},X_{1n},X_{21},X_{31},X_{41},\ldots,,X_{n1}\right)\subset\k^{e(n)}$$ 
is equidimensional of dimension $\dim\left(V_n^{(1)}\right)=e(n)-(2n-2)=d(n-2)$. By Propositions \eqref{regvseq} and \eqref{projhyper}, this is equivalent to proving that the variety
 $$V_n^{(2)}:=V\left(\sigma_{n2}^{\textbf{X}},\sigma_{n3}^{\textbf{X}},\ldots,\sigma_{nn-1}^{\textbf{X}}\right)\subset\k^{e(n)-n}=\k^{e(n-1)}$$ 
is equidimensional of dimension $\dim\left(V_n^{(2)}\right)=e(n-1)-(n-2)=d(n-2)$, where 
 $$\textbf{X}=\{X_{1n},X_{21},X_{31},X_{41},\ldots,X_{n1}\}.$$

\noindent Note that \footnote{$\ \sigma_{nj}^{\textbf{X}}\neq 0\ ,\ \ \forall j=2,3,\ldots,n-1.$}
 $$\sigma_{nj}^{\textbf{X}}=\sum_{n>t_1>t_2>\cdots>t_{j-1}> 1}{X_{nt_1}X_{t_1t_2}\cdots X_{t_{j-2}t_{j-1}}X_{t_{j-1}n}}\in\k[\, I_n^{(1)}\ ];\ \ j=2,3,\ldots,n-1$$ 
because $t_s\neq 1,\ \ \forall s=1,2,\ldots,j-1$ and considering the $\k$-algebra isomorphism 

$$\begin{array}{rccl}
  \varphi: & \k[I_{n-1}] & \longrightarrow & \k\left[I_n^{(1)}\right] \\
           & X_{ij}  & \longmapsto     & \varphi(X_{ij}):=X_{i+1,j+1}
                   \end{array},$$
we have that, for any $j=2,3,\ldots,n-1$
 \begin{align*}
\varphi(\sigma_{n-1,j}) & =\sum_{n-1>t_1>t_2>\cdots>t_{j-1}\geq 1}{X_{n,t_1+1}X_{t_1+1,t_2+1}\cdots X_{t_{j-2}+1,t_{j-1}+1}X_{t_{j-1}+1,n}}\\
						& =\sum_{n>t_1>t_2>\cdots>t_{j-1}>1}{X_{nt_1}X_{t_1,t_2}\cdots X_{t_{j-2},t_{j-1}}X_{t_{j-1},n}}\\
						& =\sigma_{nj}^{\textbf{X}}.
 \end{align*}

\noindent Therefore $V_n^{(2)}\cong V_{n-1}$, which by the inductive hypothesis is equidimensional of dimension
 $$\dim\left(V_n^{(2)}\right)=\dim\left(V_{n-1}\right)=d(n-2).$$ 

For $V\left(\sigma_{n2},\sigma_{n3},\ldots,\sigma_{nn-1},X_{tt-1}\right)\subset\k^{e(n)}$ 
with $t\in\left\{2,3,\cdots,n-1\right\}$. Applying the Corollary \eqref{regsubseq} and Proposition \eqref{regvseq} it is enough to prove that the algebraic variety $V_n^{(1)}\subset\k^{e(n)}$ defined by 
 $$V\left(\sigma_{n2},\sigma_{n3},\ldots,\sigma_{nn-1},X_{tt-1},X_{t1},X_{t2},\ldots,,X_{t,t-2},X_{tn},X_{t+1,t},X_{t+2,t},\ldots,,X_{nt}\right)$$
is equidimensional of dimension $\dim\left(V_n^{(1)}\right)=e(n)-(2n-2)=d(n-2)$. By Propositions \eqref{regvseq} and \eqref{projhyper} it is equivalent to prove that 
 $$V_n^{(2)}:=V\left(\sigma_{n2}^{\textbf{X}},\sigma_{n3}^{\textbf{X}},\ldots,\sigma_{nn-1}^{\textbf{X}}\right)\subset\k^{e(n)-n}=\k^{e(n-1)}$$
is equidimensional of dimension $\dim\left(V_n^{(2)}\right)=e(n-1)-(n-2)=d(n-2)$, where 
 $$\textbf{X}=\{X_{tt-1},X_{t1},X_{t2},\ldots,,X_{t,t-2},X_{tn},X_{t+1,t},X_{t+2,t},\ldots,,X_{nt}\}.$$
We note that \footnote{ $\sigma_{nj}^{\textbf{X}}\neq 0$, because $\sigma_{nj}^{\textbf{X}}$ has the monomial $X_{n,j-1}X_{j-1,j-2}\cdots X_{32}X_{21}X_{1n}$ when $j\leq t$ and has $X_{n,n-1}X_{n-1,n-2}\cdots X_{n-(j-t+1),n-(j-t)}X_{n-(j-t),t-1}X_{t-1,t-2}X_{t-2,t-3}\cdots X_{32}X_{21}X_{1n}$ if $j>t$} 
 $$\sigma_{nj}^{\textbf{X}}=\sum_{\stackrel{n>t_1>t_2>\cdots>t_{j-1}\geq 1}{t_s\neq t}}{X_{nt_1}X_{t_1t_2}\cdots X_{t_{j-2}t_{j-1}}X_{t_{j-1}n}};\ \ \ j=2,3,\ldots,n-1,$$ 
because $t_s\neq t,\ \forall s=1,2,\ldots,j-1$ and considering the $\k$-algebra isomorphism

$$\begin{array}{rccl}
  \varphi: & \k[I_{n-1}] & \longrightarrow & \k\left[I_n^{(t)}\right] \\
           & X_{ij}  & \longmapsto     & \varphi(X_{ij}):=
           \left \{\begin{matrix}
	    X_{ij}\ \ \ \ \ \ \ ; & \mbox{if}\ i,j<t\\
	    X_{i+1,j}\ \ \ ;      & \ \ \, \mbox{if}\ j<t\leq i\\
	    X_{i,j+1}\ \ \ ;      & \ \ \, \mbox{if}\ i<t\leq j\\
	    X_{i+1,j+1};          & \mbox{if}\ t\leq i,j\\ 
           \end{matrix}\right.
   \end{array},$$
we have that, for any $j=2,3,\ldots,n-1$
 \begin{align*}
\varphi(\sigma_{n-1,j}) & =\sum_{\stackrel{n>t_1>t_2>\cdots>t_{j-1}\geq 1}{t_s\neq t}}{X_{nt_1}X_{t_1,t_2}\cdots X_{t_{j-2},t_{j-1}}X_{t_{j-1},n}}=\sigma_{nj}^{\textbf{X}}.
 \end{align*}
Therefore $V_n^{(2)}\cong V_{n-1}$, which by the inductive hypothesis is equidimensional of dimension 
$$\dim\left(V_n^{(2)}\right)=\dim\left(V_{n-1}\right)=d(n-2).$$
 
For $V\left(\sigma_{n2},\sigma_{n3},\dots,\sigma_{nn-1},X_{nn-1}\right)\subset\k^{e(n)}$. Applying the Corollary \eqref{regsubseq} and Proposition \eqref{regvseq} it is enough to prove that the variety $V_n^{(1)}\subset\k^{e(n)}$ defined by \footnote{Note that, here we can not use the same technique of the  previous cases, because by increasing the $n-$th column or the $n-$th row, we obtain $\sigma_{nj}^{\textbf{X}}=0$ for any $j=2,3,\ldots,n-1$ and therefore the sequence 
$\sigma_{n2}^{\textbf{X}},\sigma_{n3}^{\textbf{X}},\sigma_{n4}^{\textbf{X}},\ldots,\sigma_{n,n-1}^{\textbf{X}}$ is not a regular sequence. In this case, we are only going to increase the $(n-1)-$th row of the matrix $I_n$.}
 $$V\left(\sigma_{n2},\sigma_{n3},\ldots,\sigma_{nn-1},X_{nn-1},X_{n-1,1},X_{n-1,2},\ldots,X_{n-1,n-2},X_{n-1,n}\right)$$
is equidimensional of dimension $\dim\left(V_n^{(1)}\right)=e(n)-(2n-2)=d(n-2)$, which by Propositions \eqref{regvseq} and \eqref{projhyper}, it is equivalent to proving that 
 $$V_n^{(2)}:=V\left(\sigma_{n2}^{\textbf{X}},\sigma_{n3}^{\textbf{X}},\ldots,\sigma_{nn-1}^{\textbf{X}}\right)\subset\k^{e(n)-n}=\k^{e(n-1)},$$
where $\textbf{X}=\{X_{n,n-1},X_{n-1,1},X_{n-1,2},\ldots,X_{n-1,n-2},X_{n-1,n}\}$, is equidimensional with dimension $\dim\left(V_n^{(2)}\right)=e(n-1)-(n-2)=d(n-2)$. Firstly, for any $j=2,3,\ldots,n-1$ 
$$\sigma_{nj}^{\textbf{X}}=\sum_{n-1>t_1>t_2>\cdots>t_{j-1}\geq 1}{X_{nt_1}X_{t_1t_2}\cdots X_{t_{j-2}t_{j-1}}X_{t_{j-1}n}}.$$
Now, considering the $\k$-algebra isomorphism 

 $$\begin{array}{rccl}
\varphi: & \k[I_{n-1}] & \longrightarrow & \k\left[I_n^{(n-1)}\right] \\
         & X_{ij}  & \longmapsto     & \varphi(X_{ij}):=
    \left \{\begin{matrix}
	    X_{ij};   & \ \ \, \mbox{if}\ i,j\neq n-1\\
	    X_{nj};   & \mbox{if}\ i=n-1\\
	    X_{in};   & \, \mbox{if}\ j=n-1\\
            \end{matrix}\right.
   \end{array},$$
it follows that, for any $j=2,3,\ldots,n-1$
 \begin{align*}
\varphi(\sigma_{n-1,j}) & =\sum_{n-1>t_1>t_2>\cdots>t_{j-1}\geq 1}{X_{nt_1}X_{t_1,t_2}\cdots X_{t_{j-2},t_{j-1}}X_{t_{j-1},n}}=\sigma_{nj}^{\textbf{X}}
 \end{align*}
and therefore $V_n^{(2)}\cong V_{n-1}$. Hence by the inductive hypothesis we have that $V_n^{(2)}$ is equidimensional of dimension $\dim\left(V_n^{(2)}\right)=\dim\left(V_{n-1}\right)=d(n-2)$.
 
Finally, we conclude that the variety $V_n$ is equidimensional with dimension
  $$\dim\left(V_n\right)=d(n-1).$$

\end{proof}


\subsection{Regular components of the Gelfand-Tsetlin variety}

We will say that a subvariety $V$ of the Gelfand-Tsetlin variety of $\gl_n$ is {\it regular} if $V$ has the form $V\left(f_1,f_2,\ldots,f_t\right)\subset\k^{n^2}$, where each polynomial is a variable, that is, for each $i$
$$f_i=X_{rs},\ \ \mbox{for some}\ \ 1\leq r,s\leq n.$$
Without loss of generality, we will assume that $f_i\neq f_j,\ \forall i\neq j$.


\begin{remark}
\label{isocompreg}
Let $V_1=V(X_{i_1},X_{i_2},\ldots,X_{i_r})$ and $V_2=V(X_{j_1},X_{j_2},\ldots,X_{j_r})$ be two regular subvarieties of a variety $V\subseteq\k^m$, so there exists a permutation $\sigma\in S_m$ such that 
$$\sigma(i_t)=j_t,\ \forall t=1,2,\ldots,r.$$
Therefore, we have the $\k$-algebra isomorphism
$$\begin{array}{rccl}
  \varphi_{\sigma}: & \k[X_1,X_2,\ldots,X_m] & \longrightarrow & \k[X_1,X_2,\ldots,X_m]  \\
           & X_i  & \longmapsto     & \varphi_{\sigma}(X_i):=X_{\sigma(i)}
    \end{array}.$$
Since\footnote{Given a subset $V\subseteq\k^n$, we denote by $I(V)$ the ideal of all polynomials vanishing on $V$.} $\varphi_{\sigma}\left(I\left(V_2\right)\right)=I\left(V_1\right)$, follows that $I\left(V_2\right)$ is a minimal prime ideal if and only if $I\left(V_1\right)$ is a minimal ideal. Equivalently, $V_2=V(X_{j_1},X_{j_2},\ldots,X_{j_r})$ is an irreducible component of $V$ if and only if $V_1=V(X_{i_1},X_{i_2},\ldots,X_{i_r})$ is an irreducible component of $V$.

\end{remark}

\vspace{0.5cm}

As a corollary from Theorem \eqref{fracagln}, all the regular components of the Gelfand-Tsetlin variety for $\gl_n$ are equidimensionals with dimension $\frac{n(n-1)}{2}$.

\vspace{0.5cm}

\begin{corollary}
\label{corVFraca}
All the regular components of the Gelfand-Tsetlin variety are isomorphic. In particular the irreducible components and 
$$V_{\leq}:=V\left(\left\{X_{ij}:1\leq i\leq j\leq n\right\}\right).$$
are isomorphic.
\end{corollary}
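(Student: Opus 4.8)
The plan is to read the statement off from two facts that are already available: the dimension of a regular component of the Gelfand-Tsetlin variety, and the coordinate-permutation isomorphism of Remark~\eqref{isocompreg}. In outline: first show $V_{\le}$ is itself a regular component, then show every regular component is cut out by the same number $r$ of coordinates, and finally invoke Remark~\eqref{isocompreg} to get an isomorphism between any two of them.

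First I would check that $V_{\le}$ is itself a regular component. Setting $X_{ij}=0$ for all $1\le i\le j\le n$ annihilates every $\overline{\gamma}_{ij}$: a monomial $X_{t_1t_2}X_{t_2t_3}\cdots X_{t_{j-1}t_j}X_{t_jt_1}$ of $\overline{\gamma}_{ij}$ can survive only if $t_s>t_{s+1}$ for each $s$ (indices read cyclically, so also $t_j>t_1$), which would force $t_1>t_2>\cdots>t_j>t_1$. Hence each $\overline{\gamma}_{ij}$ lies in the ideal $\bigl(X_{ij}:1\le i\le j\le n\bigr)$ and $V_{\le}$ is contained in the Gelfand-Tsetlin variety. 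Being the coordinate subspace of $\k^{n^2}$ on which the $\tfrac{n(n+1)}{2}$ distinct coordinates $X_{ij}$ with $i\le j$ vanish, $V_{\le}$ is irreducible of dimension $n^2-\tfrac{n(n+1)}{2}=\tfrac{n(n-1)}{2}$; since the Gelfand-Tsetlin variety is equidimensional of dimension $\tfrac{n(n-1)}{2}$ (Ovsienko's Theorem~\eqref{tmaOvs}), the irreducible set $V_{\le}$ must equal the component that contains it, so $V_{\le}$ is a regular component.

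Next I would fix the number of defining variables. If $V(X_{i_1},\dots,X_{i_s})$ is any regular component, it is the coordinate subspace of $\k^{n^2}$ cut out by those $s$ distinct coordinates, hence irreducible of dimension $n^2-s$; being a component of the Gelfand-Tsetlin variety it has dimension $\tfrac{n(n-1)}{2}$, so $s=\tfrac{n(n+1)}{2}=:r$. Thus every regular component, $V_{\le}$ included, is defined by the vanishing of exactly $r$ of the $n^2$ coordinates. Given two of them, $V_1=V(X_{i_1},\dots,X_{i_r})$ and $V_2=V(X_{j_1},\dots,X_{j_r})$, pick $\sigma\in S_{n^2}$ with $\{\sigma(i_1),\dots,\sigma(i_r)\}=\{j_1,\dots,j_r\}$; by Remark~\eqref{isocompreg} the induced $\k$-algebra isomorphism $\varphi_{\sigma}$ carries the defining ideal of $V_1$ onto that of $V_2$, hence passes to an isomorphism of coordinate rings $\k[V_1]\cong\k[V_2]$, that is, $V_1\cong V_2$ as affine varieties. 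Applying this with $V_2=V_{\le}$ shows that every regular component is isomorphic to $V_{\le}$, and therefore all regular components are mutually isomorphic.

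There is no real obstacle. The one genuine ingredient is the value $\tfrac{n(n-1)}{2}$ for the dimension of a regular component, which is supplied by Theorem~\eqref{fracagln} (equivalently by Theorem~\eqref{tmaOvs}); the rest is the elementary cyclic-monomial remark placing $V_{\le}$ inside the Gelfand-Tsetlin variety and the bookkeeping with the coordinate permutation of Remark~\eqref{isocompreg}. If any step calls for care it is the deduction that $V_{\le}$ is a \emph{component} and not merely a subvariety of the right dimension, and that is precisely where the equidimensionality of the ambient Gelfand-Tsetlin variety enters.
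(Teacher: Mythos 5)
Your proposal is correct, but it follows a genuinely different route from the paper. You pin everything on the ambient equidimensionality: the cyclic-monomial observation places $V_{\leq}$ inside the Gelfand-Tsetlin variety, Ovsienko's Theorem \eqref{tmaOvs} then forces the irreducible coordinate subspace $V_{\leq}$ (of dimension $n(n-1)/2$) to be a whole component and forces every regular component to be cut out by exactly $\frac{n(n+1)}{2}$ coordinates, and Remark \eqref{isocompreg} supplies the permutation isomorphism. The paper instead uses no dimension input at all: evaluating $\overline{\gamma}_{i1}$ at $E_{ii}$ and $\overline{\gamma}_{i2}$ at $E_{ij}+E_{ji}$ shows that the defining set of a regular component must contain every diagonal variable and at least one of each pair $\{X_{ij},X_{ji}\}$, hence $m\geq\frac{n(n+1)}{2}$; and if $m$ were larger, Remark \eqref{isocompreg} would transport the component to a regular subvariety $V(S)$ with $S\supseteq\{X_{ij}:i\leq j\}$, which sits strictly inside $V_{\leq}$ and so cannot be maximal --- a contradiction, giving $m=\frac{n(n+1)}{2}$ and then $\cong V_{\leq}$ by the same permutation trick. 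What each buys: your argument is shorter and has the bonus of proving that $V_{\leq}$ is itself an irreducible component, but it leans on the full equidimensionality theorem \eqref{tmaOvs}, which is the deep external result the section is trying to sidestep (the text presents the regular-component statement as a consequence of the weak version, Theorem \eqref{fracagln}); the paper's argument is elementary and independent of \eqref{tmaOvs}. One small caveat in your write-up: the parenthetical claim that the needed dimension is ``equivalently supplied by Theorem \eqref{fracagln}'' is not accurate as stated --- that theorem concerns the weak variety $V_n$ in a smaller ambient space and does not by itself give the dimension of components of the full Gelfand-Tsetlin variety, so your proof really does require \eqref{tmaOvs} (or the paper's structural counting argument) at that point.
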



\begin{proof}

Let $V\left(f_1,f_2,\ldots,f_{m}\right)\subset\k^{n^2}$ be a regular component of the Gelfand-Tsetlin variety. Clearly, for any $i\in\left\{1,2,\ldots,n\right\}$, there exists $t\in\left\{1,2,\ldots,m\right\}$ such that $f_t=X_{ii}$, otherwise there exists $A\in V\left(f_1,f_2,\ldots,f_{m}\right)\subset\k^{n^2}$ satisfying $\overline{\gamma}_{i1}(A)=A_{ii}\neq 0$, for instance, the elementary matrix $E_{ii}$. 

Also, for any $i,j\in\left\{1,2,\ldots,n\right\}$ with $i\neq j$, there exists $t\in\left\{1,2,\ldots,m\right\}$ such that $f_t=X_{ij}$ or $f_t=X_{ji}$, for otherwise, if $i>j$ there exists $A\in V\left(f_1,f_2,\ldots,f_{m}\right)$ satisfying
$$\overline{\gamma}_{i2}(A)=\sum_{t=1}^{i-1}{A_{it}A_{ti}}=\sum_{t=1}^{j-1}{A_{it}A_{ti}}+\underbrace{A_{ij}A_{ji}}_{\neq 0}+\sum_{t=j+1}^{i-1}{A_{it}A_{ti}}\neq 0,$$
for instance, $A=E_{ij}+E_{ji}$ and similarly for $i<j$. Therefore, $m\geq\frac{n(n+1)}{2}.$

\noindent If $m>\dfrac{n(n+1)}{2}$, then there are $i,j\in\left\{1,2,\ldots,n\right\}$ with $i>j$ such that 
$$X_{ij},X_{ji}\in\left\{f_1,f_2,\ldots,f_{m}\right\}.$$
Consider the set $S\subset \left\{X_{ij}:i,j=1,2,\ldots,n\right\}$ with the following conditions:
\begin{enumerate}
	\item $S$ has $m$ elements.
	\item $\left\{X_{ij}:1\leq i\leq j\leq n\right\}\subseteq S$.	
\end{enumerate}
Hence $V(S)$ is a subvariety of the Gelfand-Tsetlin variety and by the Remark \eqref{isocompreg}
 $$V\left(f_1,f_2,\ldots,f_{m}\right)\cong V(S).$$
Note that $V(S)$ is irreducible, but it is not a component, because $V(S)\subsetneq V_{\leq}$ and this implies that $V(S)$ is not a maximal subva\-rie\-ty, which contradicts our assumption that  $V\left(f_1,f_2,\ldots,f_{m}\right)$ 
is a regular component.

\noindent Hence, we can conclude that if
$V\left(f_1,f_2,\ldots,f_{m}\right)$ is a regular component, then
$$m=\frac{n(n+1)}{2}$$
and therefore, by Remark we have \eqref{isocompreg} $V\left(f_1,f_2,\ldots,f_{d(n)}\right)\cong V_{\leq}.$

\end{proof}


\section{Equidimensionality for the fibers of the Kostant-Wallach map and of its partial maps}
\label{sec:koswall}


\subsection{PBW algebras and Special filtered algebras}

A \textit{filtered algebra} is an associative algebra $U$ over $\k$, endowed with an increasing filtration $\{U_i\}_{i\geq 0}$, where 
$$U_0=\k,\ \ \ U_iU_j\subseteq U_{i+j}\ \ \ \mbox{and}\ \ \ U=\bigcup_{i=0}^{\infty}U_i.$$

\noindent For $u\in U_i\setminus U_{i-1}$ set $\deg(u)=i$ (by convention $U_{-1}=\{0\}$).

The \textit{associated graded algebra} of $U$ is  
$$\overline{U}=\gr(U)=\bigoplus_{i=0}^{\infty}U_i/U_{i-1}.$$ 

\noindent For $u\in U$ denote by $\overline{u}$ its image in $\overline{U}$. 

An algebra $U$ is called a \textit{PBW algebra} if any element of $U$ can be written uniquely as a linear combinations of ordered monomials in some fixed generators of $U$ and if $U$ is a PBW algebra such that $\overline{U}$ is a polynomial algebra then $U$ will be called \textit{special filtered} or \textit{special PBW}.


\begin{remark}
By the Poincare-Birkhoff-Witt theorem the universal envelo\-ping algebra $U(\g)$ of any finite-dimensional Lie algebra $\g$ is a special filtered algebra. 
\end{remark}


Introduce the mapping $(\, , )_U:U^t\times U^t\longrightarrow U$, such that, for $\textbf{u}=(u_1,u_2,\ldots,u_t)$ and $\textbf{v}=(v_1,v_2,\ldots,v_t)$
$$(\textbf{u},\textbf{v})_U=\sum_{i=1}^t{u_iv_i}.$$ 


\begin{lemma}
\label{PBW}
Let $U$ be a special filtered algebra, $g_1,g_2,\ldots,g_t\in U$ be mutually commuting elements such that
$\overline{g}_1,\overline{g}_2,\ldots,\overline{g}_t$ is regular in $\gr(U)=\overline{U}$ and $I\subseteq U$ be a left ideal of $U$ generated by $g_1,g_2,\ldots,g_t$. Then, any $f\in I$ can be written in the form 
$$f=\sum_{j=1}^{t}f_jg_j$$
for some $f_j\in U$, $j=1,2,\ldots,t$, such that 
$$\deg(f)=\max_{1\leq j\leq t}\deg(f_jg_j).$$
\end{lemma}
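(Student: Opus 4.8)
The plan is to prove Lemma~\ref{PBW} by induction on $\deg(f)$, lifting the corresponding statement about the Koszul complex from the graded polynomial algebra $\overline U$ to $U$. The key observation is that if $f = \sum_j f_j g_j$ is any representation of $f\in I$, then among all such representations we may choose one minimizing $N := \max_{1\le j\le t}\deg(f_jg_j)$; the goal is to show this minimal $N$ equals $\deg(f)$. Since clearly $\deg(f)\le N$ always, suppose for contradiction $N > \deg(f)$. Discard the terms with $\deg(f_jg_j) < N$ (they contribute degree $<N$) and look at the top-degree part: writing $h_j$ for the degree-$(N-\deg g_j)$ homogeneous component of $f_j$ (zero if $\deg(f_jg_j)<N$), we get in $\overline U$ the relation
\[
\sum_{j=1}^{t} \overline{h}_j\,\overline{g}_j = 0,
\]
because the degree-$N$ part of $f=\sum_j f_jg_j$ vanishes (as $\deg f < N$). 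Here I use that $\gr$ is multiplicative: $\overline{f_jg_j}=\overline{f_j}\,\overline{g_j}$ when no degree drop occurs, and $\overline{h_j g_j}$ picks out exactly the degree-$N$ contribution.

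The second step is to invoke regularity of $\overline g_1,\dots,\overline g_t$ in $\overline U$ together with Proposition~\ref{defKoszul}: a regular sequence is a complete intersection, and the Koszul resolution is exact, so the first syzygy module of $(\overline g_1,\dots,\overline g_t)$ over $\overline U$ is generated by the Koszul relations. Concretely, exactness of
$\bigwedge^2(\overline U^t)\xrightarrow{\partial}\overline U^t\xrightarrow{\eta}\overline U$
at $\overline U^t$ means every tuple $(\overline h_1,\dots,\overline h_t)$ with $\sum_j \overline h_j\overline g_j = 0$ lies in the image of $\partial$, i.e.
\[
(\overline h_1,\dots,\overline h_t) = \sum_{i<k} \overline{a}_{ik}\,(\overline g_i\,\mathbf e_k - \overline g_k\,\mathbf e_i)
\]
for suitable $\overline a_{ik}\in\overline U$; moreover, since everything in sight is homogeneous, we may take the $\overline a_{ik}$ homogeneous of the appropriate degree, namely $\deg \overline a_{ik} = N - \deg g_i - \deg g_k$.

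The third step is to lift this syzygy relation back to $U$ and perform the degree reduction. Pick any lifts $a_{ik}\in U$ of $\overline a_{ik}$ with $\deg a_{ik}\le N-\deg g_i-\deg g_k$, and set
\[
f_j' \;=\; f_j \;-\; \sum_{i<k}\bigl(\delta_{jk}\,a_{ik}g_i - \delta_{ji}\,a_{ik}g_k\bigr).
\]
Because the $g_i$ mutually commute, $\sum_j f_j' g_j = \sum_j f_j g_j = f$ (the correction terms cancel in pairs: $a_{ik}g_ig_k - a_{ik}g_kg_i = 0$). By construction the degree-$N$ part of $f_j'$ vanishes for every $j$, so $\max_j \deg(f_j'g_j) < N$, contradicting minimality of $N$. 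Hence $N=\deg f$, as desired. The main obstacle — the point deserving the most care — is the bookkeeping in this cancellation step: one must verify that the commutativity of the $g_i$ is exactly what makes the Koszul correction terms $a_{ik}(g_i\mathbf e_k - g_k\mathbf e_i)$ pair off to zero against the $g_j$, and that the lifts $a_{ik}$ can indeed be chosen without raising degrees (which is automatic since $U$ is filtered and $a_{ik}$ is a lift of a homogeneous element of degree $N-\deg g_i-\deg g_k$, so any preimage lies in the corresponding filtration piece). Everything else is a routine transfer of the exactness of the Koszul resolution from $\overline U$ to $U$ via the associated graded.
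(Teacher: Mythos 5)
Your proposal is correct and follows essentially the same argument as the paper: take a representation minimizing the maximal degree $N$, pass to top-degree parts in $\overline U$ to get a syzygy, use exactness of the Koszul resolution (Proposition \eqref{defKoszul}) to express it via Koszul relations, lift to $U$, and use commutativity of the $g_i$ to cancel the correction terms and strictly lower $N$, contradicting minimality. The only cosmetic difference is that the paper reindexes so the top-degree terms are the first $r$ and invokes Corollary \eqref{regsubseq} for the subsequence, whereas you keep all $t$ components with zero entries where the maximum is not attained.
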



\begin{proof}
Suppose that $f\in I$ and consider $f=f_1g_1+f_2g_2+\cdots +f_tg_t$ with the minimal possible 
$$d=\max_{1\leq i\leq t}deg(f_ig_i).$$
We may assume that, $d=\deg(f_ig_i)$ if and only if $i=1,2,\ldots,r$ for some $r\leq t$. Consider $\textbf{f}=(f_1,f_2,\ldots,f_t)$,
$\textbf{g}=(g_1,g_2,\ldots,g_t)$ and for a vector
$\textbf{s}=(s_1,s_2,\ldots,s_t)\in U^t$ denote
$\overline{\textbf{s}}=(\overline{s}_1,\overline{s}_2,\ldots,\overline{s}_r,0,0,\ldots,0)\in \overline{U}^t$.

If $d>\deg(f)$ then from condition $(\textbf{f},\textbf{g})_U=f$ follows $(\overline{\textbf{f}},\overline{\textbf{g}})_{\overline{U}}=0$ and by the Koszul resolution in the proposition \eqref{defKoszul} we have 
$\eta(\overline{f}_1,\overline{f}_2,\ldots,\overline{f}_r)=0$, that is,
$(\overline{f}_1,\overline{f}_2,\ldots,\overline{f}_r)\in\ker(\eta)$. Since $U$ is a special filtered algebra, the sequence
$\overline{g}_1,\overline{g}_2,\ldots,\overline{g}_r$ is regular in
$\overline{U}$ by corollary \eqref{regsubseq}. Again, by the Koszul resolution in the proposition \eqref{defKoszul} we have $(\overline{f}_1,\overline{f}_2,\ldots,\overline{f}_r)\in Im(\partial)$, therefore, there exists $g_{ij}\in U,\ 1\leq i<j\leq r$ such that 
$$\overline{\textbf{f}}=\sum_{1\leq i<j\leq r}{\overline{g}_{ij}(\overline{g}_i\textbf{e}_j-\overline{g}_j\textbf{e}_i)},$$ where $\textbf{e}_1,\textbf{e}_2,\ldots,\textbf{e}_r$ is a standard basis of $\overline{U}^r$.

Now, we consider 
$$\textbf{k}=\sum_{1\leq i<j\leq r}{g_{ij}(g_i\textbf{e}_j-g_j\textbf{e}_i)}\in U^t$$
then $\overline{\textbf{f}}=\overline{\textbf{k}}$, hence for $\textbf{h}=(h_1,h_2,\ldots,h_t)=\textbf{f}-\textbf{k}$ holds $\deg(h_i)\leq\deg(f_i)$ for all $i=1,2,\ldots,t$ and $\deg(h_i)<\deg(f_i)$ for all $i=1,2,\ldots,r$. But, note that $(g_i\textbf{e}_j-g_j\textbf{e}_i,\textbf{g})_U=g_ig_j-g_jg_i=0$, hence
$(\textbf{k},\textbf{g})_U=0$. Then
$$\sum_{i=1}^t{h_ig_i}=(\textbf{h},\textbf{g})_U=(\textbf{f}-\textbf{k},\textbf{g})_U=(\textbf{f},\textbf{g})_U=f.$$
Since 
$$\max_{1\leq i\leq t}\deg(h_ig_i)<\max_{1\leq i\leq t}\deg(f_ig_i)$$
contradicts the minimality of $d$, we therefore conclude that $d=\deg(f)$.

\end{proof}


Before starting the following proposition, we note that if $U$ is a special filtered algebra, the grading on $\overline{U}=\gr(U)$ does not coincide in general with
its standard grading as a polynomial algebra. In particular, if $d_1,d_2,\ldots, d_n$ are positive integers then $\Lambda=\overline{\Lambda}=\k[X_1,X_2,\ldots,X_n]$, endowed with a grading $\deg(X_i)=d_i$, is a special filtered algebra with respect to the corresponding
filtration. When all $d_i=1$ we get a standard grading on $\Lambda$.


\begin{proposition}
\label{regseq}
Let $\Lambda=\k[X_1,X_2,\ldots,X_n]$ and $g_1,g_2,\ldots,g_t\in\Lambda$ be such that the sequence $\overline{g}_1,\overline{g}_2,\ldots,\overline{g}_t$ is
regular in $\Lambda=\gr(\Lambda)$. Then, the sequence $g_1,g_2,\ldots,g_t$ is regular in $\Lambda$.
\end{proposition}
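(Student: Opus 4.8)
The plan is to reduce the statement to the graded situation where we may invoke the equivalence between regularity and the complete-intersection property for graded sequences. The key observation is that the grading on $\gr(\Lambda)$ induced by the filtration of $\Lambda$ as a special filtered algebra is a weighted grading $\deg(X_i)=d_i$ with all $d_i>0$, and with respect to this grading the $\overline{g}_i$ are homogeneous by construction (each $\overline{g}_i$ is the leading term of $g_i$ in the filtration). Since $\gr(\Lambda)=\Lambda$ as rings, Proposition \eqref{regvseq}(i) applies: the sequence $\overline{g}_1,\ldots,\overline{g}_t$ being regular in the graded ring $\gr(\Lambda)$ is equivalent to $\overline{g}_1,\ldots,\overline{g}_t$ being a complete intersection for $\gr(\Lambda)$, i.e.\ $H_n(\overline{g}_1,\ldots,\overline{g}_t;\gr(\Lambda))=0$ for all $n>0$.

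The next step is to transfer the vanishing of Koszul homology from $\gr(\Lambda)$ back to $\Lambda$. I would put the natural filtration on the Koszul complex $K_\bullet(g_1,\ldots,g_t;\Lambda)$ coming from the filtration on $\Lambda$ (each exterior power $\bigwedge^k \Lambda^t$ inherits a filtration, and the differentials, which are given by multiplication by the $g_i$, respect it), and observe that the associated graded complex is exactly $K_\bullet(\overline{g}_1,\ldots,\overline{g}_t;\gr(\Lambda))$. A standard spectral-sequence (or direct diagram-chasing) argument then shows that if the associated graded complex is acyclic in positive degrees, so is the original complex: a cycle in $K_k(g;\Lambda)$ for $k>0$ has a leading term that is a cycle in $K_k(\overline g;\gr\Lambda)$, hence a boundary there, so we can subtract off a boundary in $K_\bullet(g;\Lambda)$ and strictly lower the filtration degree of the cycle, then iterate until the cycle is zero (using that the filtration is exhaustive and bounded below). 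This is essentially the same bookkeeping as in Lemma \eqref{PBW}, applied here in the commutative setting. Therefore $H_n(g_1,\ldots,g_t;\Lambda)=0$ for all $n>0$, i.e.\ $g_1,\ldots,g_t$ is a complete intersection for $\Lambda$.

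Finally, $\Lambda=\k[X_1,\ldots,X_n]$ is Cohen-Macaulay (indeed regular), so Proposition \eqref{regvseq}(ii) gives that $V(g_1,\ldots,g_t)$ is equidimensional of dimension $n-t$; and since $\Lambda$ with its weighted grading is again a graded ring in which we want to test regularity of a sequence that need not be homogeneous, I instead invoke Proposition \eqref{regvseq}(i) once more — but now I must be careful, since the $g_i$ themselves are generally not homogeneous for the weighted grading. The cleaner route is: the complete-intersection property $H_n(g_1,\ldots,g_t;\Lambda)=0$ for $n>0$ together with $0\le t\le n=\dim\Lambda$ forces, via Proposition \eqref{regvseq}(ii) and the Cohen-Macaulayness of $\Lambda$, that $\mathrm{ht}(g_1,\ldots,g_t)=t$; a standard commutative-algebra fact then yields that $g_1,\ldots,g_t$ is a regular sequence in the Cohen-Macaulay (hence catenary, unmixed) ring $\Lambda$.

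\emph{The main obstacle} will be the middle step: making rigorous the passage from acyclicity of the associated graded Koszul complex to acyclicity of the Koszul complex over $\Lambda$. One has to check that the filtration on the Koszul complex is well-behaved (exhaustive, separated, bounded below on each homological degree) so that the successive-approximation argument terminates; this is exactly the commutative shadow of the argument already carried out in Lemma \eqref{PBW}, so I expect the author's proof to cite Lemma \eqref{PBW} or re-run its computation, and the rest of the argument to be the routine invocations of Proposition \eqref{regvseq} sketched above.
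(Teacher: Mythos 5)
Your first two steps are sound: transferring acyclicity of the Koszul complex through the filtration is indeed the same bookkeeping as Lemma \eqref{PBW}, and with the shifted filtration on $\bigwedge^k\Lambda^t$ the associated graded complex is $K_\bullet(\overline{g}_1,\ldots,\overline{g}_t;\gr\Lambda)$, so $H_n(g_1,\ldots,g_t;\Lambda)=0$ for $n>0$ does follow. The genuine gap is in your endgame. Koszul homology is insensitive to the order of the elements, whereas regularity of a sequence in a non-local, non-graded ring is order-sensitive, and the ``standard commutative-algebra fact'' you invoke (in a Cohen--Macaulay ring, $t$ elements generating an ideal of height $t$ form a regular sequence) is false without local or homogeneity hypotheses. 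Concretely, in $\Lambda=\k[x,y,z]$ take $g_1=y(1-x)$, $g_2=z(1-x)$, $g_3=x$: then $(g_1,g_2,g_3)=(x,y,z)$ has height $3$, the Koszul complex on $g_1,g_2,g_3$ is acyclic in positive degrees, and $V(g_1,g_2,g_3)$ is equidimensional of dimension $0$, yet $g_2$ is a zero divisor modulo $(g_1)$ because $g_2\,y=g_1\,z$ and $y\notin(g_1)$. So ``complete intersection for $\Lambda$'' plus the height count does not give regularity in the prescribed order, which is exactly what the proposition asserts; and, as you yourself note, Proposition \eqref{regvseq}(i) cannot be used to close the gap because the $g_i$ are not homogeneous.

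The paper avoids this by never leaving the order-sensitive setting: assuming $g_i$ is a zero divisor (or invertible) modulo $(g_1,\ldots,g_{i-1})$, it picks $f\notin(g_1,\ldots,g_{i-1})$ of minimal degree with $fg_i=\sum_{j<i}f_jg_j$, uses Lemma \eqref{PBW} to control $\deg(f_jg_j)$, deduces $\overline{f}\,\overline{g}_i=0$ modulo $(\overline{g}_1,\ldots,\overline{g}_{i-1})$, and then uses that $\overline{g}_i$ is a nonzerodivisor modulo the previous symbols to write $\overline{f}=\sum_{j<i}\overline{h}_j\overline{g}_j$ and strictly lower the degree of $f$, a contradiction. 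If you want to salvage your Koszul route you must retain the order: apply the filtration argument to each truncated sequence $g_1,\ldots,g_i$ (the truncated symbol sequence is regular, hence Koszul-acyclic by Proposition \eqref{regvseq}(i)), localize at an associated prime of $(g_1,\ldots,g_{i-1})$ containing $g_i$, and invoke the local criterion that Koszul acyclicity of elements of the maximal ideal of a Noetherian local ring implies regularity, treating the ``not invertible'' requirement separately. That machinery is not available in the paper, and the direct leading-term argument via Lemma \eqref{PBW} is both shorter and precisely what that lemma was set up for.
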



\begin{proof} 
Suppose that the sequence $g_1,g_2,\ldots,g_t$ is not regular in $\Lambda$, hence for some $i\in\left\{1,2,\ldots,t\right\}$ such that $g_i$ is a zero divisor or invertible in $\Lambda/(g_1,g_2,\ldots,g_{i-1})$.

If $g_i$ is a zero divisor in $\Lambda/(g_1,g_2,\ldots,g_{i-1})$, then there exists $f\in\Lambda$ such that $f\notin(g_1,g_2,\ldots,g_{i-1})$ and  $fg_i\in(g_1,g_2,\ldots,g_{i-1})$, which implies that
$$fg_i=\sum_{j=1}^{i-1}f_jg_j, \ \ \mbox{for some}\ \ f_j\in\Lambda.$$
We consider $f$ with the minimal degree possible, then by lemma \eqref{PBW} 
$$\deg(fg_i)=\max_{1\leq j\leq i-1}\deg(f_jg_j)$$
hence $\overline{f}\overline{g}_i$ is zero in
$\Lambda/(\overline{g}_1,\overline{g}_2,\ldots,\overline{g}_{i-1})$. Since the sequence $\overline{g}_1,\overline{g}_2,\ldots,\overline{g}_{t}$ is regular, then $\overline{g}_1,\overline{g}_2,\ldots,\overline{g}_{i-1}$ is also regular. Therefore $\overline{f}$ is zero in
$\Lambda/(\overline{g}_1,\overline{g}_2,\ldots,\overline{g}_{i-1})$, hence $$\overline{f}=\sum_{j=1}^{i-1}\overline{h}_j\overline{g}_j,\ \ \mbox{for some}\ \ h_j\in\Lambda.$$
It follows that 
$$f'=f-\sum_{j=1}^{i-1}h_jg_j$$ 
has a smaller degree than $f$. On the other hand, since
$f\notin(g_1,g_2,\ldots,g_{i-1})$, we obtain $f'\notin(g_1,g_2,\ldots,g_{i-1})$ and
$f'g_i\in (g_1,g_2,\ldots,g_{i-1})$. This contradicts the mi\-ni\-ma\-li\-ty of the degree of $f$, therefore $g_i$ is not a zero divisor in $\Lambda/(g_1,g_2,\ldots,g_{i-1})$. The case when the image $g_i$ in $\Lambda/(g_1,g_2,\ldots,g_{i-1})$ is inver\-tible is treated analogously.

\end{proof}



\subsection{Kostant-Wallach map and its partial map}

Let $n$ be a positive integer and $X\in M_n(\C)$ be a matrix. For $i=1,2,\ldots,n$ let $\chi_i(X)\in\C^i$ be the vector $\chi_{i}(X):=\left(\chi_{i1}(X),\chi_{i2}(X),\ldots,\chi_{ii}(X)\right)$, where $\chi_{ij}(X)$ is the coefficient of the powers $t^{i-j}$ of the the characteristic polynomial $X_i$ (for $j=1,2,\ldots,i$) and $X_i$ denote the $i\times i$ submatrix in the upper left corner of $X$.


\begin{definition}
\noindent
\begin{enumerate}
 \item The \textbf{Kostant-Wallach map} is the morphism given by 
$$\begin{array}{rccl}
 \Phi: & M_n(\mathbb{C}) & \longrightarrow & \mathbb{C}^{\frac{n(n+1)}{2}}\\
       & X               & \longmapsto     & \Phi(X):=\left(\chi_1(X),\chi_2(X),\ldots,\chi_n(X)\right).
 \end{array}.$$

\item For $k=1,2,\dots,n$, the \textbf{$k$-partial Kostant-Wallach map} is the morphism $\Phi_k:=\pi_k\circ\Phi$, with
$\pi_k:\C\times\C^2\times\cdots\C^n\longrightarrow\C^{n-k+1}\times\cdots\times\C^{n-1}\times\C^{n}$ the projection on the last $k$ factors, i.e.
$$\begin{array}{rccl}
   \Phi_k: & M_n(\C) & \longrightarrow & \C^{n-k+1}\times\cdots\C^{n-1}\times\C^{n}\\
           & X       & \longmapsto     & \Phi_k(X):=\left(\chi_{n-k+1}(X),\ldots,\chi_{n-1}(X),\chi_{n}(X)\right).
  \end{array}$$

 \end{enumerate}
 
\end{definition}

\begin{remark}
 Clearly $\Phi_n=\Phi$.
\end{remark}

 For any $\alpha\in\C^{n-k+1}\times\cdots\C^{n-1}\times\C^{n}$, let $\Phi_k^{-1}(\alpha)$
be the {\bf fiber} of $\Phi_k$.


\begin{remark}
\label{remarkEig}
For any $X\in M_n(\C)$ and $1\le i\le n$, let 
$${\bf E}_X(i):=\{\lambda_{i1}(X),\lambda_{i2}(X),\ldots,\lambda_{ii}(X)\}$$
be the set of all eigenvalues (with multiplicity) of $i$-th principal submatrix $X_i$. Hence, if $X,Y\in M_n(\C)$, then 

$X$ and $Y$ lie in the same fiber of $\Phi_k$ $\Longleftrightarrow$ ${\bf E}_X(i)={\bf E}_Y(i)$ for $n-k+1\le i\le n$.

\end{remark}

\begin{proposition}[Colarusso-Evens, 2015]
\label{ColEv}
\noindent

For any $\alpha\in\C^{n-1}\times\C^{n}$, the fiber $\Phi_2^{-1}\left(\alpha\right)$ of the $2$-partial Kostant-Wallach map $\Phi_2$ is equidimensional with dimension $\dim(\Phi_2^{-1}\left(\alpha)\right)=n^2-2n+1$.
\end{proposition}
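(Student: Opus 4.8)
The plan is to deduce Proposition \ref{ColEv} from Theorem \ref{fracagln} by passing from the Kostant-Wallach map to its "graded" counterpart and identifying the associated variety with (a translate and trivial extension of) the weak Gelfand-Tsetlin variety $V_n$. First I would fix $\alpha=(\alpha_{n-1},\alpha_n)\in\C^{n-1}\times\C^n$ and describe $\Phi_2^{-1}(\alpha)$ as the zero locus in $M_n(\C)\cong\C^{n^2}$ of the $2n-1$ polynomial equations $\chi_{n-1,j}(X)=\alpha_{n-1,j}$ ($j=1,\dots,n-1$) and $\chi_{n,j}(X)=\alpha_{n,j}$ ($j=1,\dots,n$). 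Since $\C^{n^2}$ is Cohen-Macaulay, by Proposition \ref{regvseq}(ii) it suffices to show this sequence of $2n-1$ functions is a complete intersection for $\k[X_{ij}]$; equidimensionality of dimension $n^2-(2n-1)=n^2-2n+1$ then follows automatically. By Proposition \ref{regvseq}(i) (applied after homogenizing) and Proposition \ref{regseq}, it is enough to prove that the top-degree components of these polynomials form a regular sequence; the top-degree component of $\chi_{i,j}(X)-\alpha_{i,j}$ is, up to sign, the degree-$j$ coefficient of $\det(tI-X_i)$, i.e. $\pm s_j(X_i)$ where $s_j$ is the $j$-th elementary-symmetric/power-sum type invariant. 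So the problem reduces to showing that
\[
s_1(X_{n-1}),\dots,s_{n-1}(X_{n-1}),\,s_1(X_n),\dots,s_n(X_n)
\]
is a regular sequence in $\k[X_{ij}:1\le i,j\le n]$.

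Next I would exploit the obvious triangular structure. The invariants $s_1(X_n),\dots,s_n(X_n)$ of the full matrix involve all variables, while $s_j(X_{n-1})$ involves only the $(n-1)\times(n-1)$ block. Using $s_1(X_n)=s_1(X_{n-1})+X_{nn}$ one can eliminate $X_{nn}$, and then successively use the remaining $s_j(X_n)$ to eliminate variables from the last row/column, reducing — via Proposition \ref{projhyper} applied repeatedly, exactly as in the proof of Theorem \ref{fracagln} — to a regular-sequence question for $s_1(X_{n-1}),\dots,s_{n-1}(X_{n-1})$ together with the specializations of $s_2(X_n),\dots,s_n(X_n)$. After setting the eliminated variables to zero, the specialized invariant $s_j(X_n)$ becomes (up to lower-order terms already accounted for) precisely $\sigma_{nj}$ in the notation of Theorem \ref{fracagln}: indeed $\overline\gamma_{n,j}$ restricted to $I_n$ after killing the first-column variables is the sum over strictly decreasing index chains $n>t_1>\dots>t_{j-1}\ge 1$ of $X_{nt_1}X_{t_1t_2}\cdots X_{t_{j-1}n}$. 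Meanwhile $s_1(X_{n-1}),\dots,s_{n-1}(X_{n-1})$ is, up to the same kind of triangular elimination, the defining data of $V_{n-1}$. Thus the ambient variety splits, up to isomorphism and multiplication by an affine space, as a product-like combination of $V_{n-1}$ and $V_n$, both equidimensional of the predicted dimensions by Theorem \ref{fracagln}, and the dimension count $n^2-2n+1=d(n-1)+d(n-2)+(\text{number of free variables})$ checks out.

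The main obstacle I anticipate is the bookkeeping in the elimination step: one must choose the right order in which to eliminate variables from the last row and column so that each successive leading term is genuinely a new variable (so Proposition \ref{projhyper} applies), and must verify that no specialization accidentally kills a $\sigma_{nj}$ — the footnote trick in the proof of Theorem \ref{fracagln} (increasing the $(n-1)$-st row rather than the $n$-th row or column) shows this is delicate. A clean way to organize this is to set $\textbf{X}=\{X_{nn},X_{n1},\dots,X_{n,n-1},X_{1n},\dots,X_{n-1,n}\}\setminus\{\text{one variable kept alive per }\sigma_{nj}\}$ and check directly that $\chi_{n,j}^{\textbf{X}}=\sigma_{nj}$ for $j\ge2$ while $\chi_{n,1}$ supplies the single linear equation eliminating $X_{nn}$; then invoke Theorem \ref{fracagln} for $V_n$ and, on the complementary block of variables, again for $V_{n-1}$. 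Once the two regular sequences are in disjoint variable sets the concatenation is automatically regular, completing the argument.
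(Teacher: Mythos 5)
Your opening reduction is fine and in fact coincides with what the paper does later in greater generality: passing from $\alpha$ to $0$ via Proposition \eqref{regseq}, and translating ``complete intersection'' into ``equidimensional of dimension $n^2-(2n-1)$'' via Proposition \eqref{regvseq}, is exactly the mechanism of Theorem \eqref{ColEvGen} and Corollary \eqref{genColEv}. But at that point the paper feeds in the \emph{full} Ovsienko Theorem \eqref{tmaOvs} (and, for Proposition \eqref{ColEv} itself, simply cites Colarusso--Evens), whereas you try to extract the regularity of the sequence $\chi_{n-1,1},\dots,\chi_{n-1,n-1},\chi_{n,1},\dots,\chi_{n,n}$ from the weak Theorem \eqref{fracagln} alone --- something the paper explicitly presents only as a ``belief'' even for the Gelfand--Tsetlin variety itself. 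The place where your sketch breaks is the elimination/specialization step.

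Concretely: to make $\chi_{n,j}$ specialize to $\sigma_{nj}$ you must set to zero every diagonal variable and every strictly upper-triangular variable outside the last column (that is what ``restricting to $\k[I_n]$'' means), but these variables all occur in the $\chi_{n-1,j}$; after that specialization the $(n-1)\times(n-1)$ block is strictly lower triangular, hence nilpotent, so every $\chi_{n-1,j}^{\textbf{X}}$ vanishes identically and the specialized sequence contains zeros --- it is not a complete intersection, and Proposition \eqref{projhyper} yields nothing. If instead you only kill variables from the last row and column (which the $\chi_{n-1,j}$ avoid), then $\chi_{n,j}^{\textbf{X}}$ is not $\sigma_{nj}$; killing the whole last row (or column) even gives $\chi_{n,j}^{\textbf{X}}=\chi_{n-1,j}$ for $j\le n-1$ and $\chi_{n,n}^{\textbf{X}}=0$, a repeated sequence with a zero, again not regular --- this is the delicacy flagged in the footnote to the proof of Theorem \eqref{fracagln}, and here there is no analogue of ``expand only the $(n-1)$-st row'' that serves both levels at once. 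Two further points: ``using $s_j(X_n)$ to eliminate variables'' is not an operation available in this framework (Proposition \eqref{projhyper} only lets you adjoin coordinates to the sequence and set them to zero), and the hoped-for splitting into \emph{disjoint} variable sets is impossible, since $I_{n-1}$ and $I_n$ share all strictly lower-triangular variables of the first $n-1$ rows; the numerology says the same thing, because $d(n-1)+d(n-2)=(n-1)^2$ together with $e(n)+e(n-1)+2=n^2$ would force the number of killed variables to be zero, i.e.\ no specialization at all. The interaction between levels $n-1$ and $n$ in the shared variables is precisely the content of the Colarusso--Evens theorem, and your argument assumes it away; as it stands there is a genuine gap, and Proposition \eqref{ColEv} cannot be deduced from Theorem \eqref{fracagln} by this route.
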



\begin{proof} 
See \cite{ColEv}.

\end{proof}


\subsection{Kostant-Wallach map vs Gelfand-Tseltin varieties}

We can view the zero fiber of the Kostant-Wallach map and of its partial maps as Gelfand-Tsetlin varieties of the following form:  


\begin{definition}
For $k=1,2,\ldots,n$ and $\beta\in\C^{n-k+1}\times\cdots\times\C^{n-1}\times\C^{n}$, we define the $k$-\textbf{partial Gelfand-Tsetlin variety in} $\beta$ as the
algebraic variety
$$\widetilde{V}_{\beta}^k=V\left(\left\{\overline{\gamma}_{ij}-\beta_{ij}:\ n-k+1\leq i \leq n\ \ \mbox{and}\ \ \ 1\leq j\leq i \right\}\right)\subset\C^{n^2}.$$

\end{definition}


\begin{remark}
Clearly, $\widetilde{V}_{0}^n=V\left(\left\{\overline{\gamma}_{ij}: \ \ 1\leq j\leq i \leq n \right\}\right)$ is the Gelfand-Tsetlin variety for $\gl_n$.

\end{remark}


\begin{proposition}[\v Zelobenko, 1973]
For each $i=1,2,\dots,n$
$$V\left(\chi_{i1},\chi_{i2},\ldots,\chi_{ii}\right)=V\left(\overline{\gamma}_{i1},\overline{\gamma}_{i2},\ldots,\overline{\gamma}_{ii}\right).$$
\end{proposition}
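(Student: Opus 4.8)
The plan is to show the two ideals $(\chi_{i1},\chi_{i2},\ldots,\chi_{ii})$ and $(\overline{\gamma}_{i1},\overline{\gamma}_{i2},\ldots,\overline{\gamma}_{ii})$ in $\k[\overline{E}_{rs} : 1\le r,s\le i]$ (equivalently in the ring of polynomial functions on $M_i(\C)$) actually coincide, which immediately gives equality of their zero loci; passing from the $i\times i$ matrix ring to $\k^{n^2}$ is harmless since both families of polynomials involve only the entries $X_{rs}$ with $r,s\le i$. The point is that both sequences are, up to an invertible triangular change of generators with coefficients in $\k$, the same sequence: $\chi_{ij}(X)$ is (up to sign) the $j$-th elementary symmetric function $e_j$ of the eigenvalues of $X_i$, while $\overline{\gamma}_{ij}=\tr(X_i^{\,j})$ is the $j$-th power sum $p_j$ of those same eigenvalues (this is exactly the content of Remark~\ref{contatraco} read in the commutative graded setting, where the noncommutative product degenerates to the commutative one).

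First I would record the Newton identities, which over a field of characteristic zero express $e_1,\ldots,e_i$ as polynomials in $p_1,\ldots,p_i$ and conversely $p_1,\ldots,p_i$ as polynomials in $e_1,\ldots,e_i$; crucially, for each $m$ one has
$$
p_m = (-1)^{m-1} m\, e_m + (\text{polynomial in } e_1,\ldots,e_{m-1}),
$$
and symmetrically $e_m$ is $\pm\frac{1}{m}p_m$ plus a polynomial in the lower $p_k$'s. Hence the change of variables between $(e_1,\ldots,e_i)$ and $(p_1,\ldots,p_i)$ is a triangular polynomial automorphism of $\k[e_1,\ldots,e_i]$, so $(e_1,\ldots,e_i)=(p_1,\ldots,p_i)$ as ideals in any $\k$-algebra receiving these elements. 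Next I would note that $\chi_{ij}$ equals $(-1)^{j} e_j$ evaluated at the eigenvalues of $X_i$ — i.e. $\chi_{ij}$ is $(-1)^j$ times the $j$-th coefficient of the characteristic polynomial — while $\overline{\gamma}_{ij}=p_j$ evaluated at the same eigenvalues, by the trace formula of Remark~\ref{contatraco}. Substituting these symmetric-function identities into the matrix entries, the Newton relations become honest polynomial identities among the $\chi_{ij}$ and the $\overline{\gamma}_{ij}$ in $\k[\overline{E}_{rs}]$, giving
$$
\overline{\gamma}_{ij}\in(\chi_{i1},\ldots,\chi_{ij})\quad\text{and}\quad \chi_{ij}\in(\overline{\gamma}_{i1},\ldots,\overline{\gamma}_{ij})
$$
for every $j\le i$ by induction on $j$. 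Therefore the two ideals are equal, and consequently $V(\chi_{i1},\ldots,\chi_{ii})=V(\overline{\gamma}_{i1},\ldots,\overline{\gamma}_{ii})$.

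I do not expect a serious obstacle here; the only thing to be careful about is bookkeeping of signs and the normalization conventions (whether $\chi_{ij}$ is the coefficient of $t^{i-j}$ with or without sign, and that the leading coefficient is $1$ so that $\chi_{i0}=1$ is not among the generators), together with the fact that the argument uses invertibility of the integers $1,2,\ldots,n$, which is exactly why the hypothesis $\operatorname{char}\k=0$ fixed at the start of Section~2 is needed. One could alternatively phrase the whole thing representation-theoretically — both sets of elements generate the image of the Harish-Chandra / Gelfand center $Z_i$ of $U(\gl_i)$ in the graded algebra, and $\v{Z}$elobenko's description shows this image is a polynomial algebra with either set as coordinates — but the Newton-identity computation is the most self-contained route and is the one I would write out.
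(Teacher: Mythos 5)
Your argument is correct, but note that the paper itself does not prove this proposition at all: its ``proof'' is only a pointer to \v{Z}elobenko's book (page 170) and to Ovsienko's paper, where the statement appears as a classical fact. What you supply is the standard self-contained justification, and it actually yields more than the statement asks for: identifying $\chi_{ij}$ with $(\pm)$ the elementary symmetric functions $e_j$ of the eigenvalues of the variable matrix $X_i$ and $\overline{\gamma}_{ij}=\tr(X_i^{\,j})$ with the power sums $p_j$, the Newton identities give a triangular change of generators, so the two ideals in $\k[X_{rs}:r,s\le i]$ coincide, not merely their zero sets; equality of the varieties in $\k^{n^2}$ then follows since both families of polynomials involve only the variables $X_{rs}$ with $r,s\le i$. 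Two small points worth keeping explicit if you write this out: (i) the relations between $\tr(X^m)$ and the characteristic-polynomial coefficients are universal polynomial identities with integer coefficients in the matrix entries (checked, say, on diagonalizable complex matrices and then specialized), so ``substituting the eigenvalues'' is legitimate at the level of the polynomial ring; and (ii) the division by $m$ in inverting the Newton recursion is exactly where the standing hypothesis $\operatorname{char}\k=0$ enters, as you note. For the proposition itself only equality of varieties (i.e.\ of radicals) is needed, so your conclusion is stronger than required; this is harmless and in fact convenient elsewhere in the paper, where the generators, not just their zero locus, are manipulated.
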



\begin{proof}
 See \cite{Zelobenko} (page $170$) or \cite{Ovs}.
 
\end{proof}


The relation between Kostant-Wallach maps and Gelfand-Tseltin varieties is determined by the following corollary:


\begin{corollary}
\label{KWvsGTs}
For all $k=1,2,\dots,n$, the $k$-partial Gelfand-Tsetlin variety in zero coincides with the fiber in zero of the $k$-partial Kostant-Wallach map, i.e. 
$$\widetilde{V}_{0}^k=\Phi_k^{-1}(0).$$
\end{corollary}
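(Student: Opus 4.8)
The plan is to rewrite both $\Phi_k^{-1}(0)$ and $\widetilde V_0^k$ as the common zero locus, inside $\C^{n^2}\cong M_n(\C)$, of the polynomials $\chi_{ij}$ with $n-k+1\leq i\leq n$ and $1\leq j\leq i$, and then to reduce to the proposition of \v Zelobenko stated above, applied one index $i$ at a time.

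First I would unwind the definition of the fiber. Identifying $M_n(\C)$ with $\C^{n^2}$ via the matrix entries $X_{rs}$, each $\chi_{ij}$ is a polynomial function on $\C^{n^2}$ (in fact involving only the variables $X_{rs}$ with $r,s\leq i$, since $\chi_i(X)$ depends only on the principal submatrix $X_i$). Because $\Phi_k=\pi_k\circ\Phi$ with $\pi_k$ the projection on the last $k$ factors, we have $\Phi_k(X)=\left(\chi_{n-k+1}(X),\ldots,\chi_n(X)\right)$, so $X\in\Phi_k^{-1}(0)$ if and only if $\chi_{ij}(X)=0$ for every $i\in\{n-k+1,\ldots,n\}$ and every $j\in\{1,\ldots,i\}$. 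Hence
$$\Phi_k^{-1}(0)=V\left(\left\{\chi_{ij}:n-k+1\leq i\leq n,\ 1\leq j\leq i\right\}\right)=\bigcap_{i=n-k+1}^{n}V\left(\chi_{i1},\chi_{i2},\ldots,\chi_{ii}\right),$$
the last equality being the elementary fact that the zero set of a union of families of polynomials is the intersection of the individual zero sets.

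Next I would apply the proposition of \v Zelobenko above, which for each $i$ gives $V(\chi_{i1},\ldots,\chi_{ii})=V(\overline{\gamma}_{i1},\ldots,\overline{\gamma}_{ii})$ as subvarieties of $\C^{n^2}$, to every $i$ in the range $n-k+1\leq i\leq n$; substituting into the previous display yields
$$\Phi_k^{-1}(0)=\bigcap_{i=n-k+1}^{n}V\left(\overline{\gamma}_{i1},\ldots,\overline{\gamma}_{ii}\right)=V\left(\left\{\overline{\gamma}_{ij}:n-k+1\leq i\leq n,\ 1\leq j\leq i\right\}\right)=\widetilde V_0^k,$$
which is exactly the assertion.

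I do not expect a genuine obstacle here: the whole statement is a formal consequence of \v Zelobenko's proposition, which itself encodes Newton's identities (valid in characteristic zero) relating the power sums $\overline{\gamma}_{ij}=\tr(X_i^j)$ to the coefficients $\chi_{ij}$ of the characteristic polynomial of $X_i$. The only points worth a line of care are that $\chi_{ij}$ and $\overline{\gamma}_{ij}$ involve only the variables $X_{rs}$ with $r,s\leq i$, so that the equality of the two varieties legitimately takes place in the ambient $\C^{n^2}$ (each side being the preimage under the coordinate projection $\C^{n^2}\to\C^{i^2}$ of the corresponding subvariety of $\C^{i^2}$), and that the sign convention in ``$\chi_{ij}=$ coefficient of $t^{i-j}$'' is irrelevant to the zero locus.
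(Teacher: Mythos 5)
Your argument is correct and is essentially the same as the paper's: both unwind $\Phi_k^{-1}(0)$ as $\bigcap_{i=n-k+1}^{n}V(\chi_{i1},\ldots,\chi_{ii})$ and then apply \v Zelobenko's proposition to each $i$ in that range to replace it by $\bigcap_{i=n-k+1}^{n}V(\overline{\gamma}_{i1},\ldots,\overline{\gamma}_{ii})=\widetilde V_0^k$. Your extra remarks on the ambient space and sign conventions are fine but not needed.
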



\begin{proof} Since $\Phi_k^{-1}(0)=\left\{X\in M_n(\C)\ /\ \Phi_k(X)=0\right\}$ and
$$\begin{array}{rccl}
   \Phi_k: & M_n(\C) & \longrightarrow & \C^{n-k+1}\times\cdots\times\C^{n-1}\times\C^{n}\\
           & X       & \longmapsto     & \Phi_k(X):=\left(\chi_{n-k+1}(X),\ldots,\chi_{n-1}(X),\chi_{n}(X)\right),
  \end{array}$$
we have
\begin{align*}
\Phi_k^{-1}(0)&=\bigcap_{i=n-k+1}^n V\left(\chi_{i1},\chi_{i2},\ldots,\chi_{ii}\right)=\bigcap_{i=n-k+1}^n V\left(\overline{\gamma}_{i1},\overline{\gamma}_{i2},\ldots,\overline{\gamma}_{ii}\right)=\widetilde{V}_{0}^k.
\end{align*}

\end{proof}


\begin{remark}

This Corollary asserts that the Gelfand-Tsetlin variety coincides with the zero fiber of the Kostant-Wallach map $\Phi^{-1}(0)$. Consequently, from Remark \eqref{remarkEig} the Gelfand-Tsetlin variety for $\gl_n(\C)$ is exactly the set of the strongly nilpotent matrices $n\times n$ (where, a matrix $X\in M_n(\C)$ is said to be \textbf{strongly nilpotent}, when all its $i-$th principal submatrices $X_i$ are nilpotent).   
\end{remark}


\subsection{Equidimensionality of the fibers}

Now, we will prove a generalization of the Proposition \eqref{ColEv}.

\begin{theorem}
\label{ColEvGen}
For all $k=1,2,\ldots,n$ and all 
$\beta\in\C^{n-k+1}\times\cdots\times\C^{n-1}\times\C^{n}$, the $k$-partial Gelfand-Tsetlin variety $\widetilde{V}_{\beta}^k$ is equidimensional with dimension 
$$\dim\left(\widetilde{V}_{\beta}^k\right)=n^2-nk+\frac{k(k-1)}{2}.$$
In particular, for all $\alpha\in\C^{d(n)}$ the $n$-partial Gelfand-Tsetlin variety $\widetilde{V}_{\alpha}^n$ is equidimensional with dimension $\dim\left(\widetilde{V}_{\alpha}^n\right)=d(n-1).$
 \end{theorem}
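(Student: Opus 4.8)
The plan is to reduce the statement about $\widetilde V_\beta^k$ to a statement about the associated graded sequence and then to the Weak Version $V_n$ of Theorem~\eqref{fracagln}. First I would observe that $\C[X_{ij}:1\le i,j\le n]$ is Cohen--Macaulay and of Krull dimension $n^2$, so by Proposition~\eqref{regvseq}(ii) it suffices to prove that the sequence
$$\left\{\overline\gamma_{ij}-\beta_{ij}:\ n-k+1\le i\le n,\ 1\le j\le i\right\},$$
which has cardinality $\sum_{i=n-k+1}^n i = nk-\tfrac{k(k-1)}{2}$, is a complete intersection for the polynomial ring; then the equidimensionality and the dimension $n^2-(nk-\tfrac{k(k-1)}{2})$ follow at once. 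Since the polynomials $\overline\gamma_{ij}-\beta_{ij}$ are not homogeneous for the standard grading, I would pass to the grading on $\gr(U(\gl_n))$ in which $\deg(X_{ij})$ is chosen so that each $\overline\gamma_{ij}$ is homogeneous of degree $j$ (this is exactly the ``special filtered'' grading alluded to before Proposition~\eqref{regseq}); with respect to this grading the top-degree part of $\overline\gamma_{ij}-\beta_{ij}$ is $\overline\gamma_{ij}$ itself. By Proposition~\eqref{regseq} (or the reasoning of Lemma~\eqref{PBW}), it is enough to show that the homogeneous sequence $\{\overline\gamma_{ij}:n-k+1\le i\le n,\ 1\le j\le i\}$ is a complete intersection, equivalently by Proposition~\eqref{regvseq}(i) a regular sequence, in $\k[X_{ij}]$.

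Next I would peel off the variables that occur linearly. Note $\overline\gamma_{i1}=X_{11}+X_{22}+\cdots+X_{ii}$, so the elements $\overline\gamma_{n-k+1,1},\ldots,\overline\gamma_{n1}$ together allow us to eliminate, say, $X_{n-k+1,n-k+1},\ldots,X_{nn}$. Using Proposition~\eqref{projhyper} repeatedly (after reordering the regular sequence by Corollary~\eqref{regsubseq}, which applies since everything is homogeneous of positive degree), the problem becomes: show that $\{\overline\gamma_{ij}^{\textbf X}:n-k+1\le i\le n,\ 2\le j\le i\}$ is a complete intersection in the polynomial ring on the remaining variables, where $\textbf X$ sets the chosen diagonal variables to $0$. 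The key structural observation is that, after this substitution, the only surviving terms in $\overline\gamma_{ij}^{\textbf X}$ with indices $\le n-k$ reconstruct the Gelfand--Tsetlin polynomials of the smaller rank, while the terms involving the top $k$ rows/columns are exactly the $\sigma$-type polynomials $\sigma_{ij}$ appearing in Theorem~\eqref{fracagln}. Concretely I expect the sequence to split, via an $\k$-algebra isomorphism analogous to the maps $\varphi$ in the proof of Theorem~\eqref{fracagln}, into $k$ blocks: for $i=n,n-1,\ldots,n-k+1$ the polynomials $\overline\gamma_{i2}^{\textbf X},\ldots,\overline\gamma_{ii}^{\textbf X}$ modulo the lower blocks look like the weak-version system $\sigma_{i2},\ldots,\sigma_{ii}$ for $\gl_i$ on a disjoint (up to the already-used variables) set of variables, plus a copy of the full Gelfand--Tsetlin system for $\gl_{n-k}$ once all $k$ rows are removed.

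Carrying this out, I would induct on $k$. The base case $k=0$ is trivial ($\widetilde V_\beta^0=\k^{n^2}$), and the case $k=1$ says that $\{\overline\gamma_{n1}-\beta_{n1},\ldots,\overline\gamma_{nn}-\beta_{nn}\}$ is a complete intersection of length $n$, hence cuts out a variety of dimension $n^2-n$; this reduces (eliminate $X_{nn}$ via $\overline\gamma_{n1}$, then use $\sigma_{nn}=X_{n,n-1}\cdots X_{21}X_{1n}$ to break into hyperplane cases) precisely to Theorem~\eqref{fracagln} for $V_n$, which is already proved. For the inductive step $k-1\to k$, I would use the $i=n$ block: the polynomials $\overline\gamma_{n1},\ldots,\overline\gamma_{nn}$ behave as in the $k=1$ case, stripping the $n$-th row and column and leaving the ring $\k[X_{ij}:1\le i,j\le n-1]$ together with the polynomials $\overline\gamma_{ij}$ for $n-k+1\le i\le n-1$ restricted to this smaller matrix --- but these are exactly the defining polynomials of $\widetilde V_{\beta'}^{\,k-1}$ for $\gl_{n-1}$, to which the inductive hypothesis applies, giving equidimensionality of the right dimension; a bookkeeping check that $n^2-n + \big((n-1)^2-(n-1)(k-1)+\tfrac{(k-1)(k-2)}{2}\big)$ simplifies correctly is routine but must be done carefully, and one must verify (as in the footnote-laden case analysis of Theorem~\eqref{fracagln}) that none of the restricted polynomials $\overline\gamma_{ij}^{\textbf X}$ degenerates to $0$, so that the regular-sequence length is preserved at each hyperplane split.

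\textbf{Main obstacle.} The delicate point --- exactly as in the proof of Theorem~\eqref{fracagln} --- is the hyperplane decomposition step for the top row/column: after writing $\overline\gamma_{nn}^{\textbf X}$ as a product of variables and splitting $V_n$ into a union over which factor vanishes, one must choose, in each piece, a further set of variables to adjoin so that (i) the resulting augmented sequence is still regular (no $\overline\gamma_{ij}^{\textbf X}$ collapses to zero), and (ii) after applying Proposition~\eqref{projhyper} the system is isomorphic, via an explicit relabelling of the index matrix $I_n$, to the system for the next smaller case. Getting the right relabelling $\varphi$ in the case where one removes the $n$-th row/column (where, as the author notes, one cannot naively enlarge the $n$-th row or column without killing all the $\sigma_{nj}^{\textbf X}$, and must instead enlarge the $(n-1)$-th row) is the crux; everything else is an assembly of Propositions~\eqref{regsubseq}, \eqref{regvseq}, \eqref{projhyper}, \eqref{regseq} and the already-established Theorem~\eqref{fracagln}.
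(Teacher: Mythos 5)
Your opening and closing moves are exactly the paper's: pass from $\overline\gamma_{ij}-\beta_{ij}$ to the homogeneous leading terms $\overline\gamma_{ij}$, lift regularity back through Proposition \eqref{regseq}, and conclude equidimensionality and the dimension count from Propositions \eqref{defKoszul} and \eqref{regvseq}. The divergence is in how the regularity of the homogeneous sequence $\left\{\overline\gamma_{ij}:n-k+1\le i\le n,\ 1\le j\le i\right\}$ is obtained. The paper gets it in one step: this is a subsequence of the full Gelfand--Tsetlin sequence, which is regular by Ovsienko's Theorem \eqref{tmaOvs} combined with Proposition \eqref{regvseq}, and subsequences of homogeneous regular sequences are regular by Corollary \eqref{regsubseq}. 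You instead try to derive it from Theorem \eqref{fracagln} alone by induction on $k$, and this is where there is a genuine gap.

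Concretely: (a) already for $k=1$, after adjoining the variables $\left\{X_{ij}: i\le j\le n-1\right\}\cup\{X_{nn}\}$ and applying Proposition \eqref{projhyper}, the restricted polynomials $\overline\gamma_{nj}^{\textbf{X}}$ are the power sums of the truncated matrix, not the $\sigma_{nj}$; for instance $\overline\gamma_{n4}^{\textbf{X}}$ contains monomials such as $X_{na}X_{an}X_{nb}X_{bn}$, which do not occur in $\sigma_{n4}$. The two sequences do define the same zero set (Newton's identities in characteristic $0$), so this step can be repaired via Proposition \eqref{regvseq}(ii), but the argument is missing. (b) The inductive step $k-1\to k$ is not repairable as sketched. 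The row-$n$ polynomials involve all $n^2$ variables, so the system does not split into blocks on disjoint variables; and if you "strip the $n$-th row and column" by adjoining those variables to the sequence, then Proposition \eqref{projhyper} requires the restricted sequence to be a complete intersection --- but killing the $n$-th row and column turns $\overline\gamma_{nj}$ into $\overline\gamma_{n-1,j}$, which duplicates the $i=n-1$ block already present when $k\ge 2$; a sequence with a repeated entry is never regular, so the reduction collapses (this is exactly the degeneration phenomenon the author warns about in the footnote to the last case of Theorem \eqref{fracagln}, only worse). Moreover, your own description of the end state of the splitting ("a copy of the full Gelfand--Tsetlin system for $\gl_{n-k}$") would require Ovsienko's Theorem for $\gl_{n-k}$ --- precisely the input you are trying to avoid; the paper never claims that Theorem \eqref{fracagln} implies Ovsienko's Theorem (it states this only as a belief, and verifies it by hand only for $n\le 4$ in Section 5, with a lengthy case analysis). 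Finally, the dimension bookkeeping should be done with codimensions (which do add to $nk-\tfrac{k(k-1)}{2}$), not by summing dimensions as written. The efficient fix is the paper's: invoke Theorem \eqref{tmaOvs} together with Corollary \eqref{regsubseq} and keep the rest of your argument unchanged.
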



\begin{proof}
We consider the sequence in $\gr(U(\gl_n(\C))$
$$\left\{\sigma_{ij}:=\overline{\gamma_{ij}}-\beta_{ij}\ :\ 1\leq j\leq i,\ n-k+1\leq i \leq n\right\}.$$
Note that, for $1\leq j\leq i$ and $n-k+1\leq i \leq n$, we have $\overline{\sigma_{ij}}=\overline{\overline{\gamma_{ij}}-\beta_{ij}}=\overline{\overline{\gamma_{ij}}}=\overline{\gamma_{ij}}$ in $\gr(U(\gl_n(\C))=\gr(\gr(U(\gl_n(\C)))$. Hence, the sequence 
$$\left\{\overline{\sigma_{ij}}=\overline{\gamma_{ij}}\ :\ 1\leq j\leq i,\ n-k+1\leq i \leq n\right\}$$
is a subsequence of $\left\{\overline{\gamma_{ij}}\ :\ 1\leq j\leq i\leq n\right\},$ which, by Ovsienko's Theorem \eqref{tmaOvs} and by Proposition \eqref{regvseq} is regular in $\gr(U(\gl_n(\C)))$ and from the Corollary \eqref{regsubseq} follows that $\left\{\overline{\sigma_{ij}}=\overline{\gamma_{ij}}\ :\ 1\leq j\leq i,\ n-k+1\leq i \leq n\right\}$
is a regular sequence in $\gr(U(\gl_n(\C))=\gr(\gr(U(\gl_n(\C)))$.

Consequently, by Proposition \eqref{regseq}, we get that 
$$\left\{\sigma_{ij}:=\overline{\gamma_{ij}}-\beta_{ij}\ :\ 1\leq j\leq i,\ n-k+1\leq i \leq n\right\}$$
is a regular sequence in $\gr(U(\gl_n(\C)))$. Therefore, by Propositions \eqref{defKoszul} and \eqref{regvseq}, 
$$\widetilde{V}_{\beta}^k=V\left(\left\{\overline{\gamma_{ij}}-\beta_{ij}:\ \ 1\leq j\leq i,\ n-k+1\leq i \leq n \right\}\right)\subset\C^{n^2}$$
is a equidimensional variety with dimension
$$\dim\left(\widetilde{V}_{\beta}^k\right)=n^2-n-(n-1)-(n-2)-\cdots-(n-k+1)=n^2-nk+\frac{k(k-1)}{2}.$$

\end{proof}

\vspace{0,3cm}

We can conclude the equidimensionality for the fibers of the Kostant-Wallach map and its partial maps.


\begin{corollary}
\label{genColEv}
For each $k=1,2,\ldots,n$ and each 
$\beta\in\C^{n-k+1}\times\cdots\times\C^{n-1}\times\C^{n}$, the fiber $\Phi_k^{-1}(\beta)$ of the $k$-partial Kostant-Wallach map $\Phi_k$ is equidimensional with dimension $\dim(\Phi_k^{-1}(\beta))=n^2-nk+d(k-1)$.

\end{corollary}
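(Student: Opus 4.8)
The plan is to reduce Corollary \eqref{genColEv} to Theorem \eqref{ColEvGen}, which has already established the analogous statement for the $k$-partial Gelfand-Tsetlin variety $\widetilde{V}_{\beta}^k$. The bridge between the two objects is the $\v{Z}$elobenko-type equality $V(\chi_{i1},\ldots,\chi_{ii}) = V(\overline{\gamma}_{i1},\ldots,\overline{\gamma}_{ii})$ recorded just before Corollary \eqref{KWvsGTs}, together with its obvious shifted analogue for the level sets $\chi_{ij} = \beta_{ij}$ versus $\overline{\gamma}_{ij} = \beta_{ij}$. So the first step is to write, for a fixed $\beta \in \C^{n-k+1}\times\cdots\times\C^{n}$,
\begin{align*}
\Phi_k^{-1}(\beta) &= \bigcap_{i=n-k+1}^{n} \{ X \in M_n(\C) : \chi_i(X) = \beta_i \} = \bigcap_{i=n-k+1}^{n} V\!\left(\chi_{i1}-\beta_{i1},\ldots,\chi_{ii}-\beta_{ii}\right).
\end{align*}
The second step is to observe that the $\v{Z}$elobenko equality is really a statement about the ideals generated by the two families of polynomials coinciding after a triangular change of generators: each $\chi_{ij}$ is a polynomial in $\overline{\gamma}_{i1},\ldots,\overline{\gamma}_{ij}$ and conversely (this is the Newton-identity relation between power sums and elementary symmetric-type functions of the eigenvalues of $X_i$), and crucially this triangular relation does not involve the shift constants in a way that obstructs the translation. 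Hence $V(\chi_{i1}-\beta_{i1},\ldots,\chi_{ii}-\beta_{ii}) = V(\overline{\gamma}_{i1}-\beta'_{i1},\ldots,\overline{\gamma}_{ii}-\beta'_{ii})$ for a suitable reparametrization $\beta \mapsto \beta'$, where $\beta'$ ranges over all of $\C^{n-k+1}\times\cdots\times\C^{n}$ as $\beta$ does.

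The third step is then purely bookkeeping: intersecting over $i$ from $n-k+1$ to $n$ gives $\Phi_k^{-1}(\beta) = \widetilde{V}_{\beta'}^{\,k}$, and Theorem \eqref{ColEvGen} applies verbatim to conclude that this variety is equidimensional of dimension $n^2 - nk + \frac{k(k-1)}{2} = n^2 - nk + d(k-1)$, using the notation $d(k-1) = \frac{k(k-1)}{2}$ from Notation \eqref{notacPX}. For the special case $k=2$ this recovers Proposition \eqref{ColEv} of Colarusso-Evens with dimension $n^2 - 2n + 1$, and for $k = n$ it recovers the zero-fiber statement for the full Gelfand-Tsetlin variety.

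The main obstacle, and the only point requiring genuine care rather than formal manipulation, is making precise the reparametrization $\beta \mapsto \beta'$ in step two and checking that it is a bijection of $\C^{n-k+1}\times\cdots\times\C^{n}$ onto itself (so that "for each $\beta$" on one side matches "for each $\beta'$" on the other). Concretely, one must verify that the Newton identities expressing the coefficients of the characteristic polynomial of $X_i$ in terms of the traces $\tr(X_i^j) = \overline{\gamma}_{ij}$ are invertible triangular polynomial substitutions (over a field of characteristic zero, where the relevant denominators $1,2,\ldots,i$ are invertible), and that these substitutions are compatible across the nested submatrices $X_{n-k+1} \subset \cdots \subset X_n$ — i.e. the substitution at level $i$ only involves the variables appearing at level $i$. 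Once this is checked, the cited $\v{Z}$elobenko proposition does all the real work and the equidimensionality transfers immediately from Theorem \eqref{ColEvGen}.
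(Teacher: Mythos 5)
Your proposal is correct, but it takes a different route from the paper. You reduce Corollary \eqref{genColEv} to Theorem \eqref{ColEvGen} by identifying each fiber set-theoretically, $\Phi_k^{-1}(\beta)=\widetilde{V}_{\beta'}^{\,k}$, via the Newton identities: for each level $i$ the coefficients $\chi_{i1},\ldots,\chi_{ii}$ of the characteristic polynomial of $X_i$ and the power traces $\overline{\gamma}_{i1},\ldots,\overline{\gamma}_{ii}$ are related by mutually inverse triangular polynomial substitutions (invertible since $\operatorname{char}\k=0$, and involving only level-$i$ data), so fixing one family at the values $\beta_{ij}$ is equivalent to fixing the other at the transformed values $\beta'_{ij}$; this is exactly the point needing care that you flag, and it is a genuine (but easy) strengthening of the quoted \v Zelobenko proposition, which is stated only at the zero level. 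The paper instead re-runs its regular-sequence machinery on the $\chi_{ij}$ themselves: it uses only the zero-level coincidence $\Phi_k^{-1}(0)=\widetilde{V}_0^k$ (Corollary \eqref{KWvsGTs}) together with Theorem \eqref{ColEvGen} at $\beta=0$ to see that the homogeneous sequence $\{\chi_{ij}\}$ is regular (Proposition \eqref{regvseq}), then lifts regularity to the inhomogeneous sequence $\{\chi_{ij}-\beta_{ij}\}$ by the degeneration argument of Proposition \eqref{regseq}, and concludes by the complete-intersection criterion again. What each approach buys: yours is shorter and makes the fibers over all $\beta$ literally equal to partial Gelfand-Tsetlin varieties (a slightly stronger statement), at the price of invoking the explicit Newton-identity change of generators; the paper's avoids any explicit relation between the two families beyond the zero fiber, needing only homogeneity of the $\chi_{ij}$, at the price of repeating the Koszul/regular-sequence argument already used to prove Theorem \eqref{ColEvGen}. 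Note also that for the corollary as stated you only need, for each $\beta$, the existence of the corresponding $\beta'$; bijectivity of $\beta\mapsto\beta'$ is a bonus, not a requirement.
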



\begin{proof}
Note that the polynomials $\chi_{ij}$ are homogeneous in 
$\Lambda=\C[X_{ij}:1\leq i,j\leq n]$ and therefore we can now proceed analogously to the proof of the Theorem \eqref{ColEvGen}. To do this, take the sequence 
$$\left\{\chi_{ij}-\beta_{ij}:n-k+1\leq i\leq n\ \ \mbox{and}\ \ 1\leq j\leq i\right\}$$
and the graduation $\overline{\Lambda}=\Lambda$ of $\Lambda$ given by the polynomial degree (i.e. $\deg(X_{ij}^{t})=t$). 

When $n-k+1\leq i\leq n$ and $1\leq j\leq i$, we have $\overline{\chi_{ij}-\beta_{ij}}=\overline{\chi_{ij}}=\chi_{ij}$. It follows from Corollary \eqref{KWvsGTs}, that 
$$\widetilde{V}_0^{k}=\Phi_k^{-1}(0)=V\left(\left\{\chi_{ij}:n-k+1\leq i\leq n\ \ \mbox{and}\ \ 1\leq j\leq i\right\}\right),$$
which by Theorem \eqref{ColEvGen} is equidimensional with dimension $n^2-nk+d(k-1).$ Hence, by Proposition \eqref{regvseq} the sequence 
$$\left\{\overline{\chi_{ij}-\beta_{ij}}=\chi_{ij}:n-k+1\leq i\leq n\ \ \mbox{and}\ \ 1\leq j\leq i\right\}$$
is regular in $\overline{\Lambda}$ and by Proposition \eqref{regseq} we can conclude that the sequence
$$\left\{\chi_{ij}-\beta_{ij}:n-k+1\leq i\leq n\ \ \mbox{and}\ \ 1\leq j\leq i\right\}$$
is regular in $\Lambda$. Finally, it follows from Propositions \eqref{defKoszul}, \eqref{regvseq} and the definition of complete intersections, that the variety 
$$\Phi_k^{-1}(\beta)=V\left(\left\{\chi_{ij}-\beta_{ij}:n-k+1\leq i\leq n\ \ \mbox{and}\ \ 1\leq j\leq i\right\}\right)$$
is equidimensional with dimension $\dim(\Phi_k^{-1}(\beta))=n^2-nk+d(k-1).$

\end{proof}


\begin{corollary}

For each $\alpha\in\C^{d(n)}$, the fiber $\Phi^{-1}(\alpha)$ of the Kostant-Wallach map $\Phi$ is equidimensional with dimension $\dim(\Phi^{-1}(\alpha))=d(n-1).$

\end{corollary}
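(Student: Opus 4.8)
The plan is to obtain this as the special case $k=n$ of Corollary \eqref{genColEv}. First I would invoke the Remark that $\Phi_n=\Phi$, together with the identification, for $k=n$, of the target $\C^{n-k+1}\times\cdots\times\C^{n-1}\times\C^{n}$ with $\C\times\C^2\times\cdots\times\C^n\cong\C^{d(n)}$; thus a fiber $\Phi^{-1}(\alpha)$ over an arbitrary $\alpha\in\C^{d(n)}$ is exactly a fiber $\Phi_n^{-1}(\alpha)$ in the sense of Corollary \eqref{genColEv}. In particular, the defining equations $\chi_{ij}-\alpha_{ij}$, with $n-k+1\le i\le n$ and $1\le j\le i$, range for $k=n$ over all pairs $1\le j\le i\le n$, so $\Phi^{-1}(\alpha)$ is cut out by the full Gelfand-Tsetlin system shifted by $\alpha$.

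Next I would substitute $k=n$ into the dimension formula of Corollary \eqref{genColEv}, namely $\dim(\Phi_k^{-1}(\beta))=n^2-nk+d(k-1)$, which gives $n^2-n^2+d(n-1)=d(n-1)$, the asserted value; equidimensionality is inherited verbatim. As a sanity check, taking $\alpha=0$ recovers, via Corollary \eqref{KWvsGTs}, the fact that the Gelfand-Tsetlin variety for $\gl_n$ is equidimensional of dimension $d(n-1)=\frac{n(n-1)}{2}$, in agreement with Ovsienko's Theorem \eqref{tmaOvs}.

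There is essentially no obstacle here beyond index bookkeeping: all the substantive work — the regularity of the shifted system, which rests on Ovsienko's Theorem \eqref{tmaOvs}, Proposition \eqref{regvseq}, Corollary \eqref{regsubseq} and Proposition \eqref{regseq}, and the passage from regularity to equidimensionality of the associated variety via the Koszul resolution of Proposition \eqref{defKoszul} — has already been carried out in the proof of Corollary \eqref{genColEv}, of which the present statement is a literal instance. Hence the proof will consist of little more than checking that the indexing set collapses correctly and evaluating the dimension formula at $k=n$.
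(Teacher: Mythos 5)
Your proposal is correct and coincides with the paper's own argument: the corollary is deduced exactly as the case $k=n$ of Corollary \eqref{genColEv}, using $\Phi_n=\Phi$ and evaluating the dimension formula $n^2-nk+d(k-1)$ at $k=n$ to obtain $d(n-1)$. Nothing further is needed.
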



\begin{proof}
It follows from the fact that $\Phi_n=\Phi$ and from previous corollary.

\end{proof}


\section{Some cases of Ovsienko's Theorem}

In this section we will give another proof of Ovsienko's Theorem for $\gl_2$, $\gl_3$ and $\gl_4$ using different techniques used by Ovsienko in \cite{Ovs}. Denote the Gelfand-Tsetlin variety for $\gl_n$ by $\gts_n$.


\begin{proposition}
\label{OvsThgl4}
The Gelfand-Tsetlin variety for $\gl_4$ is equidimensional of dimension $6$.
\end{proposition}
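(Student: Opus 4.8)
The plan is to deduce the statement from Theorem~\eqref{fracagln} by exhibiting the Gelfand-Tsetlin variety $\gts_4\subset\k^{16}$ as a finite union of pieces, each of which is cut out by equations that, after a suitable linear change of coordinates (a permutation of the sixteen variables $X_{ij}$), becomes one of the varieties whose equidimensionality has already been established. Concretely, the center generators $\overline\gamma_{1j},\overline\gamma_{2j},\overline\gamma_{3j}$ involve only the first three rows and columns of $X=(X_{ij})_{i,j=1}^4$, while the new relations $\overline\gamma_{41},\overline\gamma_{42},\overline\gamma_{43},\overline\gamma_{44}$ bring in the last row and column. First I would write $\overline\gamma_{41}=X_{11}+X_{22}+X_{33}+X_{44}$, so that this linear relation lets me eliminate one diagonal variable and work in $\k^{15}$; the remaining $9$ equations then live in a polynomial ring whose variables can be matched against the set $I_4$ of Theorem~\eqref{fracagln}, at least on the locus where the auxiliary variables $X_{i1}$ have been set to zero as in the proof of that theorem.

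The key computational step is to decompose $\gts_4$ according to the vanishing of the ``long cycle'' monomial. Just as in the proof of Theorem~\eqref{fracagln}, where $\sigma_{nn}=X_{n,n-1}X_{n-1,n-2}\cdots X_{21}X_{1n}$ is a product of variables and therefore $V_n$ splits along the zero sets of each factor, here the top relation $\overline\gamma_{44}$ (or rather the part of it that is not a polynomial in the lower-order $\overline\gamma$'s) contains a monomial $X_{43}X_{32}X_{21}X_{14}$, and on the complement of each hyperplane $\{X_{tt-1}=0\}$ and $\{X_{14}=0\}$ one can use the relations to express the extra variables in terms of the others. On each such piece I would apply Corollary~\eqref{regsubseq}, Proposition~\eqref{regvseq} and Proposition~\eqref{projhyper} exactly as in the inductive step of Theorem~\eqref{fracagln}, reducing to a variety isomorphic (via a coordinate permutation, using Remark~\eqref{isocompreg}) either to $V_4$, which has dimension $d(3)=6$ by Theorem~\eqref{fracagln}, or to a regular component $V_\leq$, which also has dimension $d(3)=6$ by Corollary~\eqref{corVFraca}. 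Since $\gts_4$ is then a finite union of equidimensional varieties all of dimension $6$, and none of them is contained in another (this would have to be checked, but follows because each contains a coordinate subspace of dimension $6$ not contained in the others), $\gts_4$ itself is equidimensional of dimension $6$.

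The main obstacle I anticipate is bookkeeping: verifying that after eliminating $X_{44}$ via $\overline\gamma_{41}$ and restricting to the appropriate hyperplanes, the surviving relations $\overline\gamma_{2j}^{\mathbf X},\overline\gamma_{3j}^{\mathbf X},\overline\gamma_{4j}^{\mathbf X}$ really do form a regular sequence and really are carried, under an explicit $\k$-algebra isomorphism of the form in the proof of Theorem~\eqref{fracagln}, onto the defining equations of $V_4$ (or of a known regular component). The delicate point is that setting the ``extra'' variables to zero must not kill any of the relations (the non-vanishing footnotes in the proof of Theorem~\eqref{fracagln} are exactly the analogous checks), and that the count of eliminated variables matches the expected drop in dimension: starting from $\k^{16}$, the linear relation $\overline\gamma_{41}$ and the nine further relations should cut the dimension down to $16-10=6$, consistently with $d(3)$. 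A secondary obstacle is to argue that the pieces coming from the ``diagonal'' relations $\overline\gamma_{i1}$ (forcing, e.g., $X_{11}=X_{22}=X_{33}=X_{44}=0$ on every component, cf. the proof of Corollary~\eqref{corVFraca}) are correctly absorbed into this decomposition, so that no spurious lower-dimensional component is missed. Once these verifications are carried out the conclusion $\dim\gts_4=6$ is immediate.
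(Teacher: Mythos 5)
Your overall plan---decompose $\gts_4$ into closed pieces and reduce each piece, via coordinate permutations together with Corollary \eqref{regsubseq} and Propositions \eqref{regvseq}, \eqref{projhyper}, to varieties already known to be equidimensional of dimension $6$---is indeed the paper's strategy, but the proposal has a genuine gap at its central step. The splitting you propose is not valid as stated: unlike $\sigma_{44}=X_{43}X_{32}X_{21}X_{14}$ in the weak version, where the restriction to the variable set $I_4$ forces $\sigma_{44}$ to be a single monomial, the relation $\overline{\gamma}_{44}$ (even after reducing modulo the other $\overline{\gamma}_{ij}$) is not a monomial, so $\gts_4$ is not the union of its intersections with the hyperplanes $X_{t,t-1}=0$ and $X_{14}=0$; merely \emph{containing} that monomial yields no decomposition, and the step ``on the complement of each hyperplane one can express the extra variables in terms of the others'' is an open-set argument that does not control the closed irreducible components. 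The splitting that actually works comes from the lower relations: modulo the diagonal relations $\overline{\gamma}_{22}$ becomes $2X_{12}X_{21}$, and on $X_{12}=0$ the relation $\overline{\gamma}_{33}$ reduces to a multiple of $X_{13}X_{32}X_{21}$; this is how the paper obtains $\gts_4=V_4\cup A\cup B\cup C\cup D$.

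More seriously, your key claim that every piece is carried by a permutation of the variables onto $V_4$ or onto the regular component $V_{\leq}$ is unsubstantiated and is exactly where the difficulty lies (note also that $V_{\leq}\subset V_4$, so invoking Corollary \eqref{corVFraca} buys nothing beyond Theorem \eqref{fracagln}). In the paper's decomposition the pieces $A$, $C$, $D$ are indeed reduced to $V_4$ by transposition and conjugation by permutation matrices, but the piece $B=V\left(\overline{\gamma}_{11},\overline{\gamma}_{21},X_{12},\overline{\gamma}_{31},\overline{\gamma}_{32},X_{21},\overline{\gamma}_{41},\overline{\gamma}_{42},\overline{\gamma}_{43},\overline{\gamma}_{44}\right)$, on which both $X_{12}$ and $X_{21}$ vanish while the irreducible quadric $\overline{\gamma}_{32}=2\left(X_{13}X_{31}+X_{23}X_{32}\right)$ survives, is not identified with $V_4$ or $V_{\leq}$ by any such renaming, and you give no argument for it. Handling $B$ is the bulk of the paper's proof: one intersects successively with the auxiliary hyperplanes $X_{13}-X_{14}$, $X_{31}-X_{41}$, then $X_{43}-X_{34}$, $X_{23}-X_{32}$, $X_{23}$, applying Corollary \eqref{regsubseq} and Proposition \eqref{regvseq} at each stage and decomposing the result into explicitly checkable pieces. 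Without an argument for this piece, the count $16-10=6$ is only the expected dimension; the assertion to be proved is precisely that the ten relations form a complete intersection along every component. (A minor point in your favor: the non-containment check at the end is unnecessary, since a finite union of closed equidimensional varieties of the same dimension is automatically equidimensional.)
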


\begin{proof}
Consider the following decomposition 
$$\gts_4=V_4\cup A\cup B\cup C\cup D\subset\k^{16},$$ where
 $$V_4=V\left(\overline{\gamma}_{11},\overline{\gamma}_{21},X_{12},\overline{\gamma}_{31},X_{23},X_{13},\overline{\gamma}_{41},\overline{\gamma}_{42},\overline{\gamma}_{43},\overline{\gamma}_{44}\right),$$
$$A=V\left(\overline{\gamma}_{11},\overline{\gamma}_{21},X_{12},\overline{\gamma}_{31},X_{32},X_{13},\overline{\gamma}_{41},\overline{\gamma}_{42},\overline{\gamma}_{43},\overline{\gamma}_{44}\right),$$
$$B=V\left(\overline{\gamma}_{11},\overline{\gamma}_{21},X_{12},\overline{\gamma}_{31},\overline{\gamma}_{32},X_{21},\overline{\gamma}_{41},\overline{\gamma}_{42},\overline{\gamma}_{43},\overline{\gamma}_{44}\right),$$
$$C=V\left(\overline{\gamma}_{11},\overline{\gamma}_{21},X_{12},\overline{\gamma}_{31},\overline{\gamma}_{32},X_{32},\overline{\gamma}_{41},\overline{\gamma}_{42},\overline{\gamma}_{43},\overline{\gamma}_{44}\right),$$
$$D=V\left(\overline{\gamma}_{11},\overline{\gamma}_{21},X_{21},\overline{\gamma}_{31},\overline{\gamma}_{32},\overline{\gamma}_{33},\overline{\gamma}_{41},\overline{\gamma}_{42},\overline{\gamma}_{43},\overline{\gamma}_{44}\right).$$
Also, the variables matrix $X=\left(X_{ij}\right)_{i,j=1}^n$ and using the variable change\footnote{$X^t$ is the transpose matrix of $X$}:
\begin{enumerate}
\item $X\longmapsto X^t$, we have 
$$V_4\cup A\cup B\cup C=V\left(\overline{\gamma}_{11},\overline{\gamma}_{21},X_{12},\overline{\gamma}_{31},\overline{\gamma}_{32},\overline{\gamma}_{33},\overline{\gamma}_{41},\overline{\gamma}_{42},\overline{\gamma}_{43},\overline{\gamma}_{44}\right)\cong D.$$
\item $X\longmapsto (E_{12}+E_{21}+E_{33}+E_{44})X^t(E_{12}+E_{21}+E_{33}+E_{44})$ it follows that 
$$V_4\cup A=V\left(\overline{\gamma}_{11},\overline{\gamma}_{21},X_{12},\overline{\gamma}_{31},\overline{\gamma}_{32},X_{13},\overline{\gamma}_{41},\overline{\gamma}_{42},\overline{\gamma}_{43},\overline{\gamma}_{44}\right)\cong C.$$
\item $X\longmapsto (E_{11}+E_{23}+E_{32}+E_{44})X(E_{11}+E_{23}+E_{32}+E_{44})$ we obtain that 
$$V_4\cong A.$$
\end{enumerate}

Due $V_4$ is the weak version of the Gelfand-Tsetlin variety for $\gl_4$, from Theorem \eqref{fracagln} we only need to prove that $B$ is equidimensional of $\dim B=6$, which by Corollary \eqref{regsubseq} and Proposition \eqref{regvseq} it is enough to prove that the variety $B_1=B\cap V\left(X_{13}-X_{14},X_{31}-X_{41}\right)\subset\k^{16}$ is equidimensional of dimension $4$. Observe that $B_1=\overline{B}_1\cup\widehat{B}_1$, where
$$\Omega=\{X_{11},X_{22},X_{12},X_{33},X_{21},X_{44}\},$$
$$\overline{B}_1=V\left(\{X_{13}-X_{14},X_{31}-X_{41}\}\cup\Omega\cup\{\overline{\gamma}_{32},\overline{\gamma}_{42},\overline{\gamma}_{43},X_{31}X_{13}\}\right),$$
$$\widehat{B}_1=V\left(\{X_{13}-X_{14},X_{31}-X_{41}\}\cup\Omega\cup\{\overline{\gamma}_{32},\overline{\gamma}_{42},\overline{\gamma}_{43},X_{32}X_{24}+X_{42}X_{23}\}\right).$$
Clearly, $\overline{B}_1$ is equidimensional of $\dim\overline{B}_1=4$, repeating the previous argument over $\widehat{B}_1$, combining Corollary \eqref{regsubseq} and Proposition \eqref{regvseq} it is sufficient to show that the variety $B_2=\widehat{B}_1\cap V\left(X_{43}-X_{34}\right)$ is equidimensional of $\dim B_2=3$. Note $B_2=\overline{B}_2\cup\widehat{B}_2$, where respectively $\overline{B}_2$ and $\widehat{B}_2$ are
$$V\left(X_{43}-X_{34},X_{13}-X_{14},X_{31}-X_{41},\overline{\gamma}_{32},\overline{\gamma}_{42},X_{31}X_{13},X_{32}X_{24}+X_{42}X_{23}\right)\cap V(\Omega),$$
$$V\left(\{X_{43}-X_{34},X_{13}-X_{14},X_{31}-X_{41}\}\cup\Omega\cup\{\overline{\gamma}_{32},\overline{\gamma}_{42},X_{34},X_{32}X_{24}+X_{42}X_{23}\}\right).$$
Similarly, it is easy to check that $\overline{B}_2$ is equidimensional of $\dim\overline{B}_2=3$ and for
$$\widehat{B}_2=V\left(\{X_{43},X_{13}-X_{14},X_{31}-X_{41}\}\cup\Omega\cup\{\overline{\gamma}_{32},\overline{\gamma}_{42},X_{34},X_{32}X_{24}+X_{42}X_{23}\}\right),$$
from Corollary \eqref{regsubseq} and Proposition \eqref{regvseq} it is enough to prove that the variety $B_3=\widehat{B}_2\cap V\left(X_{23}-X_{32}\right)$ is equidimensional of $\dim B_3=2$. Since $B_3=\overline{B}_3\cup\widehat{B}_3$ with
$$\overline{B}_3=V\left(\{X_{23}-X_{32},X_{43},X_{13}-X_{14},X_{31}-X_{41}\}\cup\Omega\cup\{\overline{\gamma}_{32},\overline{\gamma}_{42},X_{34},X_{23}\}\right),$$
$$\widehat{B}_3=V\left(\{X_{23}-X_{32},X_{43},X_{13}-X_{14},X_{31}-X_{41}\}\cup\Omega\cup\{\overline{\gamma}_{32},\overline{\gamma}_{42},X_{34},X_{24}+X_{42}\}\right).$$
Also, it is not hard to see that $\overline{B}_3$ is equidimensional of $\dim\overline{B}_3=2$ and over $\widehat{B}_3$, by Corollary \eqref{regsubseq} and Proposition \eqref{regvseq} the proof is completed by showing that $B_4=\widehat{B}_3\cap V\left(X_{23}\right)$ is equidimensional of dimension $1$. Finally, $B_4=\overline{B}_4\cup\widehat{B}_4$, where
$$\overline{B}_4=V\left(\{X_{23},X_{32},X_{43},X_{13}-X_{14},X_{41}\}\cup\Omega\cup\{X_{31},X_{24},X_{34},X_{42}\}\right),$$
$$\widehat{B}_4=V\left(\{X_{23},X_{32},X_{43},X_{14},X_{31}-X_{41}\}\cup\Omega\cup\{X_{13},X_{24},X_{34},X_{42}\}\right),$$
which both are equidimensional of dimension $1$.

\end{proof}

 
\begin{theorem}
For $n=2,3,4$, the Gelfand-Tsetlin variety for $\gl_n$ is equidimensional of dimension $\dfrac{n(n-1)}{2}$.
\end{theorem}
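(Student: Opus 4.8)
The plan is to assemble the three small cases from the machinery already built, reducing each to the weak version $V_n$ of Theorem \eqref{fracagln} plus a finite, explicit decomposition into regular subvarieties. For $n=2$, the Gelfand-Tsetlin variety $\gts_2$ is cut out by $\overline{\gamma}_{11}=X_{11}$, $\overline{\gamma}_{21}=X_{11}+X_{22}$, and $\overline{\gamma}_{22}=X_{11}^2+X_{12}X_{21}+X_{21}X_{12}+X_{22}^2$; after eliminating $X_{11}$ and $X_{22}$ via the two linear equations one is left with $V(X_{12}X_{21})=V(\sigma_{22})=V_2\subset\k^2$, which is equidimensional of dimension $1=d(1)$ by Theorem \eqref{fracagln}. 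For $n=3$, as noted in the text immediately after Theorem \eqref{fracagln}, $\gts_3$ decomposes as $V_3\cup D_3$ where $D_3$ is a regular subvariety isomorphic to $V_3$ (obtained, as in Proposition \eqref{OvsThgl4}, by a transpose-type change of variables $X\mapsto X^t$); since $V_3$ is equidimensional of dimension $d(2)=3$ by Theorem \eqref{fracagln} and every irreducible component of $\gts_3$ lies in $V_3$ or its isomorphic copy, $\gts_3$ is equidimensional of dimension $3$. For $n=4$ the statement is exactly Proposition \eqref{OvsThgl4}, already proved above.

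Concretely, I would first record the reductions for $n=2,3$ using Propositions \eqref{regvseq}, \eqref{projhyper} and Corollary \eqref{regsubseq} to pass from the Zhelobenko generators $\overline{\gamma}_{ij}$ to the purely lower-triangular-plus-last-column picture $I_n$: the linear generators $\overline{\gamma}_{i1}=\sum_{t\le i}X_{tt}$ let one eliminate the diagonal variables one at a time (each is a nonzerodivisor modulo the previous relations, since it is a variable not yet appearing), and after substitution the remaining $\overline{\gamma}_{ij}^{\textbf{X}}$ become, up to the change of variables from the remark following Theorem \eqref{fracagln}, the polynomials $\sigma_{ij}$ in $\k[I_n]$. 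Then I would invoke Theorem \eqref{fracagln} for $V_n$ and, for the non-$V_n$ components, apply Remark \eqref{isocompreg} together with the symmetry $X\mapsto X^t$ (and, for $n=3$ and $n=4$, the conjugations by permutation-type matrices used in Proposition \eqref{OvsThgl4}) to identify each extra piece with an isomorphic copy of $V_n$ or of a variety already shown equidimensional of the right dimension. Finally I would conclude: a finite union of equidimensional varieties all of the same dimension $d(n-1)$ is equidimensional of dimension $d(n-1)$, and $d(n-1)=\tfrac{n(n-1)}{2}$.

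The main obstacle is bookkeeping rather than a genuine mathematical difficulty: one must verify that the decomposition $\gts_n=V_n\cup(\text{extra pieces})$ is correct and that every extra piece really is a regular subvariety (so that Remark \eqref{isocompreg} applies) and is isomorphic, under an explicit linear change of variables preserving the $\overline{\gamma}_{ij}$, to a variety whose equidimensionality is already known. For $n=3$ this is short; for $n=4$ it is the content of the iterated $\overline{B}_i/\widehat{B}_i$ splitting in Proposition \eqref{OvsThgl4}, where at each stage one adds a single hypersurface section, checks via Corollaries \eqref{regsubseq} and Proposition \eqref{regvseq} that this drops the dimension by exactly one, and then splits off a regular piece; the one subtlety to watch (flagged in the footnote of that proof) is that one may only enlarge rows/columns of $I_n$ that do not kill all the $\sigma_{nj}^{\textbf{X}}$, so the choice of which variables to set to zero is not arbitrary. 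Once those explicit decompositions are in hand, the theorem for $n=2,3,4$ follows immediately by combining them with Theorem \eqref{fracagln} and Proposition \eqref{OvsThgl4}.
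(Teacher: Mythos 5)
Your proposal is correct and follows essentially the same route as the paper: for $n=2$ reduce to $V_2=V(X_{12}X_{21})$, for $n=3$ decompose $\gts_3$ as $V_3$ together with its transposed copy (the paper's $W$, identified with $V_3$ via $X_{ij}\mapsto X_{ji}$) and apply Theorem \eqref{fracagln}, and for $n=4$ invoke Proposition \eqref{OvsThgl4}. The additional bookkeeping you describe (eliminating the diagonal variables via Propositions \eqref{regvseq}, \eqref{projhyper} and Corollary \eqref{regsubseq} to land in the $\sigma_{nj}$ picture) only makes explicit an identification the paper leaves implicit, so there is no substantive difference.
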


\begin{proof} Clearly, $\gts_2=V(X_{11},X_{22},X_{21})\cup V(X_{11},X_{22},X_{12})\subset\k^4$ is equidimensional of $\dim\gts_2=1.$ Also, $\gts_3=V_3\cup W\subset \k^{9}$, where 
$$V_3=V\left(\overline{\gamma}_{11},\overline{\gamma}_{21},X_{12},\overline{\gamma}_{31},\overline{\gamma}_{32},\overline{\gamma}_{33}\right)\ \ \ \mbox{and}\ \ \ W=V\left(\overline{\gamma}_{11},\overline{\gamma}_{21},X_{21},\overline{\gamma}_{31},\overline{\gamma}_{32},\overline{\gamma}_{33}\right).$$
Using the variable change $X_{ij}\longmapsto X_{ji}$, we have $V_3\cong W$ and due $V_3$ is the weak version of the Gelfand-Tsetlin variety, by Theorem \eqref{fracagln} $\gts_3$ is equidimensional of $\dim\gts_3=3$.

Finally, from Proposition \eqref{OvsThgl4} $\gts_4$ is equidimensional of dimension $\dim\gts_4=6$.

\end{proof}


\end{document}